\newtheorem{theorem}[equation]{Theorem}
\newtheorem{proposition}[equation]{Proposition}
\newtheorem{lemma}[equation]{Lemma}
\newtheorem{conjecture}[equation]{Conjecture}
\theoremstyle{definition}
\newtheorem{defn}[equation]{Definition}
\newtheorem{eg}[equation]{Example}
\newtheorem{rmk}[equation]{Remark}
\newenvironment{remark}[1][]{\begin{rmk}[#1]\pushQED{\qed}}{\popQED \end{rmk}}
\numberwithin{equation}{section}
\newcommand{\fh}{\mathfrak{h}}
\newcommand{\bZ}{\mathbf{Z}}
\newcommand{\GL}{\mathbf{GL}}
\newcommand{\cS}{\mathcal{S}}
\newcommand{\bF}{\mathbf{F}}
\DeclareMathOperator{\rank}{rank}
\DeclareMathOperator{\Sym}{Sym}
\newcommand{\ol}{\overline}
\newcommand{\arxiv}[1]{\href{http://arxiv.org/abs/#1}{{\tt arXiv:#1}}}
\title{Representations of Cherednik Algebras Associated to Symmetric and Dihedral Groups in Positive Characteristic}
\author{Carl Lian}
\begin{document}

\maketitle

\begin{abstract} We consider irreducible lowest-weight
  representations of Cherednik algebras associated to certain
  classes of complex reflection groups in characteristic $p$. In particular, we study
  maximal graded submodules of Verma modules associated to these
  algebras. Various results and conjectures are presented concerning
  generators of these maximal submodules, which are found by computing
  singular polynomials of Dunkl operators.
\end{abstract}

\section{Introduction}
\indent\indent Double affine Hecke algebras were first introduced by Cherednik in 1994 to study the Macdonald conjectures, which have since been proven. Cherednik algebras, certain degenerations of these algebras, were later studied in 2002 by Etingof--Ginzburg in \cite{calogeromoser}. Since then, the representation theory of Cherednik algebras has become a topic of study in itself. One of the main problems is to understand the dimensions, and in particular, the Hilbert series, of their lowest-weight irreducible representations. While this problem is very difficult to study in general, much is known about a large number of cases. For example, Hilbert series for lowest-weight irreducible representations of Cherednik algebras associated to $S_{n}$ in characteristic zero are calculated in \cite{gordon}.

However, the positive characteristic case is not as well-studied, though progress has recently been made in the case of rank 1 groups in \cite{latour}, as well the cases of the matrix groups $\text{GL}_{n}(\mathbf{F}_{q})$ and $\text{SL}_{n}(\mathbf{F}_{q})$ in \cite{chen}. Furthermore, the case of the symmetric group $S_{n}$ in characteristic $p$ with $p>n$ is studied geometrically in \cite{bfg}. The representation theory of Cherednik algebras in positive characteristic is of particular interest for several reasons. For example, the irreducible lowest-weight representations are always finite dimensional, a phenomenon which does not occur in characteristic zero. Also, many of the tools used to study the characteristic zero case do not carry over to positive characteristic, so new techniques are needed. Finally, because the structures of representations of (complex) reflection groups change in characteristic $p$, the resulting changes in the structures of representations of their associated Cherednik algebras are also of interest. 

In this paper, we consider representations of Cherednik algebras associated with complex reflection groups in characteristic $p$. In Section \ref{symmetricspecial}, we consider the algebra associated to the symmetric group $S_{3}$ and its trivial representation when the value of the parameter $c$ is taken to be in the field $\mathbf{F}_{p}$, the only case in which the Hilbert series of the irreducible lowest-weight representation differs from that of generic $c$. Following \cite{chmutova} and \cite{bo}, we show that the irreducible quotient $L_{c}=M_{c}/J_{c}$ is a complete intersection and give generators for $J_{c}$ for most values of $c$. In Section \ref{dihedral}, we consider the dihedral groups $G(m,m,2)$, giving a complete answer for one dimensional representations $\tau$ as well as certain two-dimensional representations $\tau$ when $m$ is odd. Finally, in Section \ref{symmetricgeneric}, we return to the group $S_{n}$ and its trivial representation, considering another special case: when $p|n$. Here, we give a recursive algorithm for constructing minimal degree generators of the ideal $J_{c}$, and present partial calculations of these generators.

\section{Definitions and Previous Results}\label{defs}

\begin{defn}
Let $\fh$ be a vector space, and let $s\in\GL(\fh)$ be a finite order operator on $\fh$. $s$ is a \textbf{reflection} if $\rank(1-s)=1$. A subgroup $G\subset\GL(\fh)$ generated by reflections is a \textbf{reflection group}.
\end{defn}

\begin{defn}
Let $G \subset \GL(\fh)$ be a reflection group where $\fh$ is a vector
space over a field $K$. Let $\cS$ be the set of reflections in
$G$. For each $s \in \cS$, we pick a vector $\alpha_s \in \fh^*$ that
spans the image of $1-s$, where we use the induced action of $G$ on $\fh^*$, and let $\alpha^\vee_s \in \fh$ be defined
by the property
\[
(1-s)x = (\alpha^\vee_s, x)\alpha_s.
\]

Let $\hbar$ be a parameter and $c_s$ be a parameter for each $s \in
\cS$, where we require that $c_s = c_{s'}$ if $s$ and $s'$ are
conjugate. Let $T(\fh \oplus \fh^*)$ be the tensor algebra. The {\bf
  Cherednik algebra} $H_{\hbar, c}(G,\fh)$ is the quotient of
$(K[G] \ltimes T(\fh \oplus \fh^*)) \otimes_K K(\hbar, \{c_s\})$ by the
relations
\[ [x,x'] = 0, \quad [y,y'] = 0, \quad [y,x] = \hbar(y,x) - \sum_{s
  \in \cS} c_s (y, \alpha_s) (x, \alpha_s^\vee)s
\]
where $x,x' \in \fh^*$ and $y,y' \in \fh$. Here, $(y,x)\in K$ denotes $x$ evaluated at $y$. and $[,]$ denotes the commutator in $H_{\hbar,c}$.

We take $\hbar=1$ throughout this paper; note that as long as $\hbar\neq0$ we have $H_{\hbar,c}\cong H_{1,c/\hbar}\cong H_{1,c}$.
\end{defn}

As in the case of characteristic zero (see \cite[Proposition 3.5]{etingofma}), we have a PBW basis for $H_{\hbar,c}$ of elements
\begin{equation*}
g\prod_{i=1}^{r}y_{i}^{m_{i}}\prod_{i=1}^{r}x_{i}^{n_{i}},
\end{equation*}
where $g\in G,y_{i}\in \fh,x_{i}\in\fh^{*}$. As the Dunkl representation is not faithful in positive characteristic, one needs a bit of care in reproving this result; on the other hand, we do have a faithful map from $H_{\hbar,c}$ to the localization of differential operators, so the proof from \cite{etingofma} carries over without issues.

\begin{defn}
Let $\tau$ be a representation of $G$. We let $\Sym(\fh)$
  act as 0 on $\tau$ and construct the \textbf{Verma module}
  \[
  M_{c}(G,\fh,\tau) = H_{\hbar,c}(G,\fh)
  \otimes_{K[G]\ltimes\Sym (\fh )}\tau.
  \]
These give lowest-weight representations of $H_{\hbar,c}$.
\end{defn}

\begin{theorem}[{\cite[3.12]{etingofma}}]
 Assume $\tau$ irreducible. Let $\ol{c} \colon S\rightarrow K$ be defined so that $\ol{c}_{s}=c_{s^{-1}}$. There exists a unique contravariant form $\beta_{c} \colon M_{c}(G,\fh,\tau)\times
  M_{\overline{c}}(G,\fh^{*},\tau^{*})\rightarrow K$ satisfying the following properties:
\begin{enumerate}
\item[i.] For all $x\in \fh^{*}$, we have $\beta_{c}(xf,h)=\beta_{c}(f,xh)$.
\item[ii.] For all $y\in \fh$, we have $\beta_{c}(f,yh)=\beta_{c}(yf,h)$.
\item[iii.] For all $v\in\tau$ and $w\in\tau^{*}$, we have $\beta_{c}(v,w)=w(v)$.
\end{enumerate}
Furthermore, $M_{c}$ has a unique maximal graded submodule $J_{c}$, which may be realized as the radical of the kernel of $\beta_{c}$.
\end{theorem}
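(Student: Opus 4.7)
By the PBW theorem for Cherednik algebras, $M_c \cong \text{Sym}(\fh^*) \otimes \tau$ as a graded vector space (with $\tau$ in degree zero and $\fh^*$ in degree one), and symmetrically $M_{\ol c} \cong \text{Sym}(\fh) \otimes \tau^*$. In both modules, the subspace $\tau$ or $\tau^*$ is annihilated by the corresponding $\fh$ or $\fh^*$. The plan is to prove uniqueness first (which also produces a candidate formula), then verify well-definedness and the remaining axioms for this formula, and finally identify $\ker \beta_c$ with $J_c$ by a standard lowest-weight argument.

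For uniqueness, suppose $\beta_c$ satisfies (i)--(iii). For $f = x_1 \cdots x_n v \in M_c$ with $v \in \tau$ and $x_i \in \fh^*$, iterating (i) gives
\[
\beta_c(f, h) = \beta_c(v,\; x_n \cdots x_1 \cdot h),
\]
where each $x_i$ acts on $M_{\ol c}$ via the Cherednik algebra action of $\fh^* \subset H_{\hbar, \ol c}(G, \fh^*)$, which decreases $\fh$-degree by one. Writing the right side in the form $\sum q_j \otimes w_j \in \text{Sym}(\fh) \otimes \tau^*$ and applying (ii), each term $\beta_c(v, q_j w_j)$ reduces to $\beta_c(q_j(y) v, w_j)$, which vanishes for $\deg q_j > 0$ since $\fh$ kills $\tau \subset M_c$, and equals $w_j(v)$ when $\deg q_j = 0$ by (iii). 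This determines $\beta_c$ uniquely and provides an explicit formula.

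For existence, I would take this formula as the definition and check well-definedness and properties (i)--(iii). Independence of the chosen presentation $f = x_1 \cdots x_n v$ follows from the relations $[x_i, x_j] = 0$ in $H_{\hbar, c}$ together with the vanishing of $\fh$ on $\tau$. Property (iii) is built in and property (i) is immediate by construction. The main obstacle will be property (ii): by induction on the degree of $f$, this reduces to checking that the commutator $[y,x] = \hbar(y,x) - \sum_s c_s(y,\alpha_s)(x,\alpha_s^\vee)s$ in $H_{\hbar,c}(G,\fh)$ matches the corresponding commutator in $H_{\hbar, \ol c}(G,\fh^*)$ under the transpose pairing. This matching is precisely why the dual parameter must be $\ol c_s = c_{s^{-1}}$, and the verification will require the $G$-contravariance $\beta_c(sf, h) = \beta_c(f, s^{-1} h)$, which itself follows by induction from (i)--(iii) once (ii) is established on lower degrees.

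Finally, $\ker \beta_c$ (left kernel) is a submodule of $M_c$ by (i)--(ii), and it is proper since for any nonzero $v \in \tau$ we may choose $w \in \tau^*$ with $w(v) \neq 0$, giving $\beta_c(v,w) = w(v) \neq 0$. Conversely, any proper submodule $N$ of $M_c$ is supported in positive degree (since $M_c$ is generated by $\tau$), and for $f \in N$ of degree $n$ and any $q \in \text{Sym}^n(\fh)$, the element $q(y) \cdot f$ lies in $N \cap \tau = 0$; by iterated (ii) we conclude $\beta_c(f, q \otimes w) = w(q(y) f) = 0$ for all $w$, so $N \subseteq \ker \beta_c$. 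Thus $J_c := \ker \beta_c$ is the unique maximal submodule.
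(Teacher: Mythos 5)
The paper does not prove this statement itself (it is quoted from \cite[3.12]{etingofma}), so there is no internal proof to compare against. Your treatment of the form itself is the standard argument and is fine in outline: uniqueness by pushing the $x$'s across to force a formula, existence by verifying (i)--(iii) for that formula, with the matching of commutators under $\ol{c}_s = c_{s^{-1}}$ correctly identified as the one real computation (which you do not carry out, but the reduction is right).

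The final paragraph, however, has a genuine gap in characteristic $p$: the claim that every proper submodule $N$ is ``supported in positive degree'' silently assumes $N$ is graded. In characteristic $0$ this is automatic, because the Euler element acts on the degree-$d$ part of $M_c(\tau)$ by pairwise distinct scalars $d + h_c(\tau)$, so every submodule decomposes into graded pieces; in characteristic $p$ these scalars only separate degrees modulo $p$, and non-graded proper submodules not contained in $\ker\beta_c$ genuinely exist. For example, take $G=S_2$ acting on $\fh=K^2$ by permutation, $\tau$ trivial, $c=0$, $p>2$: then $M_c=K[x_1,x_2]$ with $y_i$ acting by $\partial_i$ and $\ker\beta_c=(x_1^p,x_2^p)$, while the ideal $N=(1+x_1^p,\,1+x_2^p)$ is $S_2$-stable, closed under $\partial_1,\partial_2$ (since $\partial_i(1+x_j^p)=0$), proper (the quotient is $K[x_1,x_2]/((x_1+1)^p,(x_2+1)^p)\ne 0$), and satisfies $\beta_c(1+x_1^p,\,1)=1\ne 0$. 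So $\ker\beta_c$ is not the unique maximal submodule in the naive sense. The statement that is actually true in characteristic $p$, and the one the rest of the paper uses, is that $\ker\beta_c$ is the unique maximal \emph{graded} proper submodule (equivalently, the largest submodule meeting $\tau$ trivially); your argument proves exactly this once you restrict to graded $N$. You should also make explicit that you need $\tau$ irreducible to conclude $N\cap\tau=0$ for a proper graded $N$ --- for decomposable $\tau$ even the graded uniqueness statement fails, since $M_c(\tau_1\oplus\tau_2)$ splits.
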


\begin{defn}
Let $x\in\Sym(H^{*})$, and let $\{x_{1},x_{2},\ldots,x_{n}\}$ and $\{y_{1},y_{2},\ldots,y_{n}\}$ denote dual bases of $H^{*}$ and $H$, respectively. We define the \textbf{Dunkl operator} $D_{y_{i}}$ by
\begin{center}
$D_{y_{i}}(x)=\dfrac{\partial}{\partial x_{i}}(x)\otimes v-\displaystyle\sum_{s\in S}c_{s}\dfrac{2(y_{i},\alpha_{s})}{1-\lambda_{s}}\cdot\dfrac{x-s(x)}{\alpha_{s}}\otimes s(v)$,
\end{center}
\end{defn}

The Dunkl operator is the main tool in dealing with elements of $J_{c}$ for the following reason:

\begin{theorem}[{\cite[Proposition 3.2]{etingofma}}]
In $M_{c}$, $y_{i}x_{i_{1}}x_{i_{2}}\cdots x_{i_{m}}=D_{y_{i}}(x_{i_{1}}x_{i_{2}}\cdots x_{i_{m}})$, that is, the action of $y_{i}$ on $\Sym(H^{*})$ is given by the Dunkl operator.
\end{theorem}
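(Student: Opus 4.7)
The plan is to prove the identity by induction on the degree $m$ of the monomial $x_{i_1}\cdots x_{i_m}$, pushing $y_i$ past the $x$'s one factor at a time via the Cherednik commutation relation. In the base case $m=0$, the element $1\otimes v\in\tau\subset M_c$ is annihilated by $y_i$ because $\mathrm{Sym}(\fh)$ acts as zero on $\tau$ by construction, and on the Dunkl side $\partial/\partial x_i(1)=0$ and $1-s(1)=0$, so $D_{y_i}(1\otimes v)=0$ as well.

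For the inductive step, I would write the monomial as $xf$ with $x=x_{i_1}$ and $f=x_{i_2}\cdots x_{i_m}$, and expand
\[
y_i\cdot(xf\otimes v)\;=\;x\cdot y_i\cdot(f\otimes v)\;+\;[y_i,x]\cdot(f\otimes v).
\]
The induction hypothesis gives $y_i\cdot(f\otimes v)=D_{y_i}(f\otimes v)$, while the defining relation of $H_{1,c}$ yields
\[
[y_i,x]\;=\;(y_i,x)\;-\;\sum_{s\in\cS}c_s(y_i,\alpha_s)(x,\alpha_s^\vee)\,s.
\]
It therefore suffices to verify the twisted Leibniz identity
\[
D_{y_i}(xf\otimes v)\;-\;x\,D_{y_i}(f\otimes v)\;=\;(y_i,x)\,f\otimes v\;-\;\sum_{s\in\cS}c_s(y_i,\alpha_s)(x,\alpha_s^\vee)\,s(f)\otimes s(v)
\]
directly from the explicit formula for $D_{y_i}$.

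This last verification is a direct calculation: the ordinary-derivative part of $D_{y_i}$ produces $(y_i,x)f$ by the usual Leibniz rule, while in the reflection sum the difference collapses via
\[
\frac{xf-s(x)s(f)}{\alpha_s}\;-\;x\cdot\frac{f-s(f)}{\alpha_s}\;=\;\frac{x-s(x)}{\alpha_s}\cdot s(f)\;=\;(x,\alpha_s^\vee)\,s(f),
\]
where the last equality uses the defining property $(1-s)x=(\alpha_s^\vee,x)\alpha_s$. The main obstacle I expect is the bookkeeping around the normalization factor $\frac{2}{1-\lambda_s}$ appearing in the formula for $D_{y_i}$: one must confirm that this coefficient, together with the convention on $c_s$, matches exactly the term coming out of the commutation relation. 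This is immediate for real reflections (where $\lambda_s=-1$), and for general complex reflections follows once the chosen normalizations of $\alpha_s$ and $\alpha_s^\vee$ are taken into account.
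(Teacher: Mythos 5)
Your proof is correct and is exactly the standard argument from the cited source \cite[Proposition 3.2]{etingofma} (the paper itself offers no proof, only the citation): induction on degree via $y_i(xf\otimes v)=x\,y_i(f\otimes v)+[y_i,x](f\otimes v)$, with the twisted Leibniz identity $\frac{xf-s(x)s(f)}{\alpha_s}=x\cdot\frac{f-s(f)}{\alpha_s}+(x,\alpha_s^\vee)\,s(f)$ doing the work. The normalization issue you flag with the factor $\frac{2}{1-\lambda_s}$ is real but harmless here, since every group treated in the paper ($S_n$ and $G(m,m,2)$) is generated by order-two reflections with $\lambda_s=-1$.
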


\begin{rmk}
Note that $\beta(D_{k}P,Q)=\beta(P,y_{k}Q)$. Thus, if $D_{k}P=0$, we have $P\in J_{c}$: in this case $P$ is said to be \textbf{singular}. More generally, if $D_{k}P\in J_{c}$, then $P\in J_{c}$. However, if $P$ is a minimal degree element of $J_{c}$, we must have $D_{k}P=0$.
\end{rmk}

\begin{proposition} \label{prop:specialc}
  Given a representation $\tau$ of the symmetric
  group $\Sigma_n$, let $h^{\rm gen}(t)$ be the Hilbert function of
  $L_c(\tau)$ for $c$ generic. If $c \notin {\bf F}_p$, then we have
  $h^c(t) = h^{\rm gen}(t)$ where $h^c(t)$ is the Hilbert function of
  $L_c(\tau)$.
\end{proposition}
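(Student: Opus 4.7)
The plan is to treat $c$ as a formal variable and determine the ``bad locus'' of $c$ where the Hilbert function deviates from the generic one. Working over $\bF_p[c]$, the Verma modules $M_c(\tau)$ and $M_{\ol c}(\tau^*)$ are free graded $\bF_p[c]$-modules, and since the Dunkl operators have coefficients polynomial in $c$ over $\bF_p$, each graded piece $\beta_c^{(d)}$ of the contravariant form is represented in fixed bases by a matrix $B^{(d)}(c) \in \mathrm{Mat}(\bF_p[c])$. Then $h^c(d) = \rank B^{(d)}(c)$, while $h^{\mathrm{gen}}(d) = \rank_{\bF_p(c)} B^{(d)}(c) =: r_d$.

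Consequently, the set of bad parameters --- those with $h^c \neq h^{\mathrm{gen}}$ --- is the zero locus in $\mathbf{A}^1_{\ol{\bF}_p}$ of the ideal generated by the $r_d \times r_d$ minors of the $B^{(d)}$. This ideal is nonzero (a generic $c$ achieves the generic rank), so the bad locus is a finite union of closed points; the task is to show that each of these is $\bF_p$-rational.

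This is where the assumption $G = \Sigma_n$ enters, through an Opdam--Heckman-type determinant formula: on each graded piece, the determinant of $\beta_c^{(d)}$ factors over $\bZ[c]$ as a product of linear forms $ac - b$ with $a, b \in \bZ$. Reducing mod $p$ yields $\bF_p$-linear factors whose roots necessarily lie in $\bF_p$, so the bad locus is contained in $\bF_p$ and the proposition follows.

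The main obstacle is the clean execution of the mod-$p$ Opdam reduction: one must verify that the classical product formula descends to $\bZ[c]$ so that its reduction mod $p$ is meaningful, and one must handle factors that become identically zero mod $p$ by passing to a suitable maximal minor when the determinant vanishes generically. For $p > n$ the argument is essentially classical, since the bad values of $c$ in characteristic zero are rational with denominators dividing a quantity controlled by $n$ and hence reduce to elements of $\bF_p$; for small $p$ the same strategy applies but with more care about degeneracies.
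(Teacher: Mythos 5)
Your setup is right (the bad locus is where the rank of the graded contravariant form drops below its generic value, and the Etingof product formula $\det \beta^d_c = N_d\prod_j(a_jc-b_j)$ over $\bZ[c]$ is the correct input), but the proof has a genuine gap exactly at the point you flag as the ``main obstacle'' and then do not resolve. The determinant formula controls the \emph{full} determinant, whereas the bad locus in degree $d$ is cut out by the $r_d\times r_d$ minors of $B^{(d)}(c)$ over $\bF_p[c]$, where $r_d$ is the generic rank \emph{in characteristic $p$}. Whenever $r_d$ is strictly less than the size of the matrix --- which is the typical situation here, since $L_c(\tau)$ is finite dimensional in characteristic $p$ even for generic $c$, so the determinant is identically zero mod $p$ in all sufficiently high degrees --- the reduction mod $p$ of the product formula reads $0=0$ and says nothing about where those minors vanish. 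The minors themselves have no product formula and no reason to factor into $\bF_p$-linear forms, so the case you defer is not a degenerate edge case but the main case, and the argument as written does not close.

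The paper's proof supplies precisely the missing idea: a Jantzen-filtration (Smith normal form) argument over the local rings $R=\bZ[c]_{(p)}$ and $R'=(\bZ[x]/(\tilde a(x)))_{(p)}$. Setting $K(i)=\{v \mid \beta^d_c v \equiv 0 \bmod p^i\}$, one has $o_{R,p}(\beta^d_c)=\sum_{i\ge 1}\dim K(i)_p$, where $o_{R,p}$ is the $p$-adic valuation of the determinant; the corank jump you want to detect is the $i=1$ term, and the semicontinuity $\dim K'(i)_p\ge\dim K(i)_p$ for all $i$ converts ``corank jumps at $c=\tilde c$'' into ``the $p$-adic valuation of $\det\beta^d_{\tilde c}$ strictly exceeds that of $\det\beta^d_c$ for $c$ an indeterminate.'' That statement \emph{is} about the full integral determinant, so the product formula applies: some factor $a_j\tilde c-b_j$ must acquire $p$-divisibility, and since $\gcd(a_j,b_j)=1$ this forces $c=b_j/a_j\in\bF_p$. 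If you want to complete your write-up, you need to add this valuation-theoretic bridge (or an equivalent device) rather than work with minors of the mod-$p$ matrix directly.
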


\begin{proof}
  Fix a $\bZ$-form of $\tau$, i.e., so all $g \in \Sigma_n$ act by
  integer-valued matrices. Let $R$ be the ring $\bZ[c]$ localized at
  the prime ideal generated by $p$. Hence $R$ is a principal ideal
  domain. For generic $c$, let $M_c(\tau)$ be the corresponding Verma
  module over the Cherednik algebra (defined over $R$). Let
  $\beta_c^d$ be a matrix representing the contravariant form on
  $M_c(\tau)$ in degree $d$. The determinant of $\beta_c^d$ depends on
  the $R$-bases chosen, but the number of times that $p$ divides the
  determinant does not, so let $o_{R,p}(\beta_c^d)$ be this number. We
  can characterize $o_{R,p}(\beta_c^d)$ in a different way. For each
  $i \ge 1$, let $K(i) = \{v \mid \beta_c^d v \text{ is divisible by }
  p^i \}$. Write $K(i)_p = K(i) \otimes_{\bZ} \bZ/p$. Then
  $o_{R,p}(\beta_c^d) = \sum_{i \ge 1} \dim_{{\bf F}_p} K(i)_p$, which
  can be shown by considering a Smith normal form of $\beta^c_d$ over
  $R$.

  If $c$ is transcendental over ${\bf F}_p$, the there is nothing to prove, so we assume that $c$ is algebraic over ${\bf F}_p$. Let $a(x) \in {\bf F}_p[x]$ be the minimal polynomial of $c$ over
  ${\bf F}_p$. Lift this to an integer polynomial
  $\tilde{a}(x) \in \bZ[x]$, i.e., the reduction modulo $p$ of
  $\tilde{a}(x)$ is $a(x)$. Let $\tilde{c}$ be a root of
  $\tilde{a}(x)$. Then $\bZ[x]/(\tilde{a}(x)) \otimes_{\bZ} \bZ/p$ is
  a finite extension ${\bf F}_q$ of ${\bf F}_p$ containing $c$. Let
  $R'$ be the localization of $\bZ[x] / (\tilde{a}(x))$ at the prime
  ideal generated by $p$.

  Consider the Verma module $M_{\tilde{c}}(\tau)$ over the Cherednik
  algebra defined over $R'$. Let $\beta^d_{\tilde{c}}$ be a matrix
  representing the contravariant form on $M_{\tilde{c}}(\tau)$ in
  degree $d$. We can define $o_{R',p}(\beta^d_{\tilde{c}})$ and
  $K'(i)$ as before. Note that
  \begin{align} \label{eqn:jantzenineq} 
    \dim_{{\bf F}_q} K'(i)_p \ge \dim_{{\bf F}_p} K(i)_p
  \end{align}
  and saying that the $d$th coefficient of $h^c(t)$ is strictly less
  than the $d$th coefficient of $h^{\rm gen}(t)$ is equivalent to saying
  that \eqref{eqn:jantzenineq} is a strict inequality for $i=1$, and
  hence equivalent to the strict inequality
  $o_{R',p}(\beta^d_{\tilde{c}}) > o_{R,p}(\beta^d_c)$. 

  So suppose that this inequality holds. The determinant of
  $\beta^d_c$ is of the form $N_d \prod_j (a_j c - b_j)$ where $N_d
  \in \bZ$ and $a_j, b_j \in \bZ$ are such that $a_j \ne 0$ and
  $\gcd(a_j,b_j) = 1$ \cite[Corollary 3.3]{etingof}, and hence
  $\det(\beta^d_{\tilde{c}}) = N_d \prod_j (a_j \tilde{c} -
  b_j)$. Hence there is some $j$ such that $p$ divides $(a_j \tilde{c}
  - b_j)$. Reducing this relation modulo $p$, this implies that $c \in
  {\bf F}_p$ since we cannot have both $a_j$ and $b_j$ divisible by
  $p$.
\end{proof}

\begin{remark} The above proof shows that in general, a function $c$
  is a special value (i.e. a value of $c$ for which the character is not equal to the generic character) in characteristic $p$ only if it is the reduction
  mod $p$ of a special value $c$ coming from characteristic 0. 
\end{remark}

\section{$S_{n}$ with special values of $c$}\label{symmetricspecial}
\indent\indent Throughout this section, we consider $M_{c}(S_{n},\fh,\tau)$ with $\tau$ trivial. Let $S_{n}$ act by permutation of indices on the basis $\{x_{1},x_{2},\ldots,x_{n}\}$. Note that reflections in $S_{n}$ are simply transpositions, which all have the same cycle type, requiring $c$ to be constant. We consider $c\in\mathbf{F}_{p}$, which are the only values of $c$ that can give different Hilbert series from $L_{c}(S_{n},\fh,\tau)$ where $c$ is taken to be generic, by \ref{prop:specialc}.

\begin{theorem}
When $c=0$, $h_{L_{c}}(t)=\left(\dfrac{1-t^{p}}{1-t}\right)^{n}$ for all $n$.
\end{theorem}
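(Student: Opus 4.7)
The plan is to identify the Verma module $M_0$ and its contravariant form explicitly, read off the kernel $J_0$, and compute the Hilbert series of the quotient.

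First, when $c=0$ the formula for the Dunkl operator in the definition above collapses to $D_{y_i}=\partial/\partial x_i$, since the sum over reflections vanishes. Because $\tau$ is trivial, the Verma module $M_0(S_n,\fh,\tau)$ is therefore just the polynomial ring $K[x_1,\dots,x_n]$, with $\fh^*$ acting by multiplication and $\fh$ acting by partial differentiation. Similarly $M_{\ol c}(S_n,\fh^*,\tau^*)=K[y_1,\dots,y_n]$ with $y_i$ acting by multiplication and $x_i$ acting by $\partial/\partial y_i$. Note that the $S_n$-action will play no role: the kernel of the contravariant form is automatically $S_n$-stable, so the module structure matters only up to the $\text{Sym}(\fh)\oplus\text{Sym}(\fh^*)$-action.

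Next, I compute the contravariant form on monomials. Starting from the normalization $\beta(1,1)=1$ coming from property (iii) and iterating property (i), I find that moving a factor of $x_k$ across the form trades it for a $\partial/\partial y_k$ on the right. A routine induction on total degree then gives
\[
\beta(x^a, y^b) \;=\; a_1!\,a_2!\cdots a_n!\,\delta_{a,b},
\]
that is, $\beta$ is the apolar pairing between $K[x]$ and $K[y]$. Property (ii) together with $\beta(1,y^c)=0$ for $|c|>0$ confirms the diagonality.

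With the form in hand, a polynomial $P=\sum_a c_a x^a$ lies in $J_0=\ker\beta$ iff $c_b\cdot b_1!\cdots b_n!=0$ in $K$ for every multi-index $b$. In characteristic $p$, the product $b_1!\cdots b_n!$ is nonzero exactly when $b_i<p$ for all $i$. Thus $P\in J_0$ iff every monomial $x^b$ appearing in $P$ has some $b_i\ge p$, i.e.
\[
J_0 \;=\; (x_1^p,\,x_2^p,\,\dots,\,x_n^p).
\]
Therefore $L_0=K[x_1,\dots,x_n]/(x_1^p,\dots,x_n^p)\cong\bigotimes_{i=1}^n K[x_i]/(x_i^p)$, whose Hilbert series is $\prod_{i=1}^n(1+t+\cdots+t^{p-1})=\bigl((1-t^p)/(1-t)\bigr)^n$, as claimed.

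The only non-bookkeeping step is the verification that $\beta$ really is the apolar form; everything else is then forced. There is no serious obstacle here, which is consistent with $c=0$ being the simplest special value --- the Cherednik algebra degenerates to the smash product $K[S_n]\ltimes \mathcal{D}(\fh)$ and one is essentially just computing the kernel of the standard pairing between divided-power and polynomial algebras in characteristic $p$.
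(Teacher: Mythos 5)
Your proof is correct and follows essentially the same route as the paper: both reduce to the observation that at $c=0$ the Dunkl operators are partial derivatives, so the contravariant form is diagonal on monomials with $\beta(x^a,y^a)=a_1!\cdots a_n!$, which vanishes in characteristic $p$ exactly when some $a_i\ge p$. Your version merely makes explicit (via the apolar pairing) what the paper leaves as "clear by induction," and reads off $J_0=(x_1^p,\dots,x_n^p)$ in one step rather than proving the two containments separately.
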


\begin{proof}
Because $c=0$, the Dunkl operators are partial derivatives, and thus kill $x_{1}^{p},\ldots,x_{n}^{p}$. Letting $J_{c}'$ be the ideal generated by these elements, we have $h_{M_{c}/J'_{c}}(t)=\left(\dfrac{1-t^{p}}{1-t}\right)^{n}$. Now, it is clear by induction that for any monomial $X$ with degree less than $p$ in each of the $x_{i}$, we have $\beta(X,X)\neq0$, so that the coefficient on the $t^{d}$ term of $h_{L_{c}}(t)$ is at least that of $\left(\dfrac{1-t^{p}}{1-t}\right)^{n}=h_{M_{c}/J'_{c}}(t)$. However, noting that $J_{c}'\subseteq J_{c}$, we immediately obtain $h_{L_{c}}(t)=\left(\dfrac{1-t^{p}}{1-t}\right)^{n}$ (and $J_{c}'=J_{c}$).
\end{proof}

\begin{theorem}
When $c=1/n$, we get $h_{L_{c}}(t)=\dfrac{1-t^{p}}{1-t}$.
\end{theorem}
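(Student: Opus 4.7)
The plan is to sandwich $h_{L_c}(t)$ against $\tfrac{1-t^p}{1-t}$ from both sides, using explicit singular polynomials for the upper bound and the contravariant form for the matching lower bound.

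For the upper bound I would first read off the singular polynomials of degree~$1$ directly from the Dunkl formula: for $f = \sum a_i x_i$ one computes $D_{y_k}(f) = a_k(1 - cn) + c\sum_j a_j$, which at $c = 1/n$ reduces to $\tfrac{1}{n}\sum_j a_j$ and so vanishes exactly when $\sum a_i = 0$. Thus every $x_i - x_j$ lies in $J_c$. Next, observe that on any $S_n$-invariant polynomial $f$ the reflection sum in the Dunkl formula vanishes term by term, giving $D_{y_k}(f) = \partial f/\partial x_k$; applied to the invariant $e_1^p$ with $e_1 := x_1 + \cdots + x_n$, this yields $D_{y_k}(e_1^p) = p\,e_1^{p-1} = 0$ in characteristic $p$, so $e_1^p$ is singular as well. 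In $L_c$ we then have $x_i \equiv x_1$, hence $e_1 \equiv n x_1$ and $0 \equiv e_1^p \equiv n^p x_1^p$; since $c = 1/n$ requires $p \nmid n$, this forces $x_1^p \equiv 0$. Consequently $L_c$ is a quotient of $K[x_1]/(x_1^p)$, so $h_{L_c}(t) \le \tfrac{1 - t^p}{1 - t}$ coefficientwise.

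For the matching lower bound it suffices to show $x_1^d \notin J_c$ for each $d < p$, and for this I would evaluate the pairing $\beta(e_1^d, \eta_1^d)$ with $\eta_1 := y_1 + \cdots + y_n \in M_{\ol c}$. Applying property~(ii) of the contravariant form $d$ times and using that the Dunkl operator is linear in its vector argument (so the operator corresponding to $\eta_1$ is $D_{\eta_1} := \sum_k D_{y_k}$), this pairing equals the scalar $D_{\eta_1}^d(e_1^d)$. Since each $e_1^m$ is $S_n$-invariant, the observation above gives $D_{\eta_1}(e_1^m) = \sum_k \partial_k(e_1^m) = nm\, e_1^{m-1}$, so iterating yields $D_{\eta_1}^d(e_1^d) = n^d d!$, which is nonzero for $d < p$ under $p \nmid n$. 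Hence $e_1^d \notin J_c$, and combined with $e_1^d \equiv n^d x_1^d \pmod{J_c}$ this forces $x_1^d \notin J_c$. Together with the upper bound, $\dim L_c^d = 1$ for $0 \le d < p$ and $\dim L_c^d = 0$ otherwise, giving $h_{L_c}(t) = \tfrac{1 - t^p}{1 - t}$.

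The two ingredients that keep both computations clean are that Dunkl operators restrict to ordinary partial derivatives on the $S_n$-invariant subalgebra and that they are $K$-linear in the vector argument; no individual step is harder than a short calculation once these are invoked. The only arithmetic point to watch is the invertibility of $n$ modulo $p$, which the hypothesis $c = 1/n$ supplies for free.
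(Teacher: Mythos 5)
Your proof is correct and follows essentially the same route as the paper's: degree-one singular vectors $x_i-x_j$ plus a degree-$p$ singular element give the upper bound, and nonvanishing of the contravariant form in each degree $d<p$ gives the matching lower bound. Your systematic use of the invariant $e_1=\sum_i x_i$, on which Dunkl operators reduce to partial derivatives, makes both the singularity of the degree-$p$ element and the explicit evaluation $\beta(e_1^d,\eta_1^d)=n^d\,d!$ cleaner than the paper's terse assertion and inductive check.
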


\begin{proof}
Note that $D_{i}(x_{i})=1-c(n-1)=c$ and $D_{i}(x_{j})=c$ whenever $i\neq j$, so $x_{1}-x_{j}\in J_{c}$ for $j=2,3,\ldots,n$. Furthermore, $x_{1}^{p}\in J_{c}$. Also, for $0\le d\le p-1$, we may check by induction that $\beta(x_{1}^{d},x_{1}^{d})=0$. It follows that for $d<p$, the $d$-th graded component of $L_{c}$ is spanned by $x_{i}^{d}$, and that for $d\ge p$ the component is trivial, implying the result.
\end{proof}

We now turn our attention to the case in which $n=3$, which is of particular interest.

\begin{theorem}
When $n=3$ and $p>3$, express $c$ as a positive integer with $c<p$. In the following three cases, assuming the resulting Hilbert series agrees with the degrees of the generators given (see remark below), $M_{c}/J_{c}$ is a complete intersection, where the degrees of the generators of $J_{c}$ are noted below:
\begin{enumerate}
\item $0<c<p/3$: $p,p+3c,p+3c$
\item $p/3<c<p/2$: $3c-p,3c-p,p$
\item $2p/3<c<p$: $p-3c,p-3c,p$
\end{enumerate}
\end{theorem}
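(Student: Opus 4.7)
The plan is to exhibit, for each of the three cases, an explicit triple of singular polynomials $f_0, f_1, f_2 \in M_c$ of the stated degrees, verify they form a regular sequence in $K[x_1, x_2, x_3]$, and then invoke the Hilbert-series hypothesis. Concretely, if $J_c' = (f_0, f_1, f_2) \subseteq J_c$ and the $f_i$ form a regular sequence, then
\[
h_{M_c/J_c'}(t) = \frac{(1-t^{d_1})(1-t^{d_2})(1-t^{d_3})}{(1-t)^3},
\]
which by hypothesis equals $h_{L_c}(t) = h_{M_c/J_c}(t)$; this forces $J_c' = J_c$, so $L_c = M_c/J_c'$ is the asserted complete intersection.

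For the degree-$p$ generator I would take $f_0 = x_1^p + x_2^p + x_3^p$ in all three cases. This is singular for every $c$: the $\partial_i$ term in $D_i$ vanishes since $p = 0$ in $K$, and in the reflection part the contributions of $x_i^p$ and $x_j^p$ to the $s_{ij}$-term, namely $\mp c(x_i^p - x_j^p)/(x_i - x_j)$, cancel in pairs, giving $D_i f_0 = 0$ for each $i$. The harder task is constructing the pair $f_1, f_2$ of the other stated degree; by $S_3$-equivariance one expects them to span a copy of the two-dimensional standard representation. For the smallest example of case 2, namely $c$ with $3c \equiv 1 \pmod p$ so that the small degree $3c - p$ equals $1$, the direct calculation $D_i(x_j - x_k) = \pm(1 - 3c)$ for $i \in \{j,k\}$ and $D_\ell(x_j - x_k) = 0$ otherwise already exhibits $f_1 = x_1 - x_2$, $f_2 = x_2 - x_3$. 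For higher-degree pairs in all three cases, I would look for singular vectors built from the alternating polynomial $\delta = (x_1 - x_2)(x_2 - x_3)(x_1 - x_3)$ together with the characteristic-zero singular vectors at $c_0 = r/3$ with $r$ coprime to $3$, reduced modulo $p$, following the method of \cite{chmutova} and \cite{bo}; an inductive approach using the commutator $[D_i, x_k]$ to lift singular vectors degree by degree is another natural route.

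Given $f_1, f_2$, the regular-sequence check simplifies considerably: since $f_0 = (x_1 + x_2 + x_3)^p$ in characteristic $p$, the vanishing locus of $f_0$ is the hyperplane $H = \{x_1 + x_2 + x_3 = 0\}$, so it suffices to verify that $f_1|_H$ and $f_2|_H$ have only the origin as common zero in $H$, which is routine once the $f_i$ are written down and follows from $S_3$-equivariance.

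The main obstacle will be the construction in Step 2, especially in case 1, where the degree $p + 3c$ scales linearly with $c$ and the generators are not obvious reductions of any low-degree characteristic-zero singular vector. Verifying that candidate $f_1, f_2$ are annihilated by all three Dunkl operators requires expanding $D_i f_j$ in the appropriate graded piece and tracking cancellations, with the precise form of the candidate depending on which of the three ranges contains $c$; this case-by-case dependence is presumably why the theorem is stated only on three subintervals and why the intermediate range $p/2 < c < 2p/3$ is not treated here.
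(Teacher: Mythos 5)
Your overall skeleton---exhibit three singular polynomials of the stated degrees, check they cut out a complete intersection, and use the assumed Hilbert series identity to force $J_c'=J_c$---is exactly the paper's strategy, and your degree-$p$ generator $x_1^p+x_2^p+x_3^p$ is the paper's third generator (the paper's justification is even simpler than yours: the polynomial is $S_3$-invariant, so every term $x-s(x)$ in the Dunkl operator vanishes outright). But the heart of the theorem is the construction of the two generators of degree $p+3c$ (resp.\ $3c-p$, $p-3c$), and your proposal does not actually produce them: you verify only the degenerate subcase where the degree is $1$, and for the general case you offer two candidate strategies (building from $\delta$, or lifting degree by degree) without carrying either out. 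You even flag this yourself as ``the main obstacle.'' That is a genuine gap, not a routine verification: as stated, the proposal proves the theorem only in the single subcase $3c-p=1$.

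For comparison, the paper's construction is a generating-function trick. Set $P(t)=(1-tx_1)^{c'}(1-tx_2)^{c'}(1-tx_3)^{c'}$ with $c'$ a fractional exponent such that $3c'\in\bZ$ but $c'\notin\bZ$, let $G$ be the coefficient of $t^{3c'+1}$, and show by a direct binomial-coefficient computation that each $\partial G/\partial x_i$ is singular: the coefficient of $x_1^ix_2^jx_3^k$ in $D_1(\partial G/\partial x_1)$ reduces, after clearing factors, to $(c'-1-i)+(c'-j)+(c'-k)=3c'-1-(i+j+k)=0$. One then specializes $c'=c+p/3$ (Case 1) or $c'=c-p/3$ (Cases 2 and 3), which makes sense modulo $p$ since $p>3$, and obtains homogeneous singular polynomials of degree $3c'=3c\pm p$. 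This is close in spirit to your suggestion of ``reducing characteristic-zero singular vectors at $c_0=r/3$ modulo $p$,'' so your instinct points in the right direction, but the actual formula and the verification that the Dunkl operators annihilate it are the content you would need to supply. Your proposed regular-sequence check via restriction to the hyperplane $x_1+x_2+x_3=0$ is a reasonable idea, but the claim that the common zero locus of $f_1|_H,f_2|_H$ is the origin ``follows from $S_3$-equivariance'' is not substantiated and would also need to be checked against the explicit $G_1,G_2$.
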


\begin{rmk}
We expect that the work of \cite{bo} will give formulas for the Hilbert series of $L_{c}$ in these cases, agreeing with those of the respective quotients of $M_{c}$ by polynomials of the above degrees.
\end{rmk}

\begin{proof}
Consider the polynomial $P(t)=(1-tx_{1})^{c'}(1-tx_{2})^{c'}(1-tx_{3})^{c'}$, and let $G$ be the coefficient of the $t^{3c'+1}$ term of $P$. Then, we first show that $\dfrac{\partial G}{\partial x_{1}},\dfrac{\partial G}{\partial x_{2}},\dfrac{\partial G}{\partial x_{3}}$ are killed by Dunkl operators in characteristic zero where we take $c=c'$. We will assume $3c'$, but $c'$ is not an integer (in particular $c'\neq0$): the Taylor series of each factor modulo $p$ will be defined since $p>3$.

Without loss of generality, consider $\dfrac{\partial G}{\partial x_{1}}$. Since partial derivatives with respect to the $x_{i}$ do nothing to $t$, it suffices to check that the $t^{3c'+1}$ coefficient of $\dfrac{\partial P}{\partial x_{1}}$ is killed by Dunkl operators. We have
\begin{center}
$\dfrac{\partial P}{\partial x_{1}}=-c't(1-tx_{1})^{c'-1}(1-tx_{2})^{c'}(1-tx_{3})^{c'}$.
\end{center}

Now, 
\begin{align*}
\dfrac{1}{-c't}D_{1}\dfrac{\partial G}{\partial x_{1}} =& -t(c'-1)(1-tx_{1})^{c'-2}(1-tx_{2})^{c'}(1-tx_{3})^{c'}\\
&-tc'(1-tx_{1})^{c'-1}(1-tx_{2})^{c'-1}(1-tx_{3})^{c'}-tc'(1-tx_{1})^{c'-1}(1-tx_{2})^{c'}(1-tx_{3})^{c'-1}.
\end{align*}

Dropping another factor of the indeterminate $t$ and changing sign, we wish to check that the $t^{3c'-1}$ coefficient of
\begin{center}
$(c'-1)(1-tx_{1})^{c'-2}(1-tx_{2})^{c'}(1-tx_{3})^{c'}$\\
$+c'(1-tx_{1})^{c'-1}(1-tx_{2})^{c'-1}(1-tx_{3})^{c'}+c'(1-tx_{1})^{c'-1}(1-tx_{2})^{c'}(1-tx_{3})^{c'-1}$
\end{center}
is zero. Consider the coefficient of $x_{1}^{i}x_{2}^{j}x_{3}^{k}t^{3c'-1}$, where we note that we must have $i+j+k=3c'-1$ (otherwise this coefficient is trivially zero). Up to a sign, this is
\begin{center}
$(c'-1)\dbinom{c'-2}{i}\dbinom{c'}{j}\dbinom{c'}{k}+c'\dbinom{c'-1}{i}\dbinom{c'-1}{j}\dbinom{c'}{k}+c'\dbinom{c'-1}{i}\dbinom{c'}{j}\dbinom{c'-1}{k}$,
\end{center}
which, upon multiplication by the appropriate factors becomes simply
\begin{center}
$(c'-1-i)+(c'-j)+(c'-k)=0$.
\end{center}

Now, in characteristic $p$, we construct singular polynomials of the desired degrees in the three cases listed above. In Case 1, consider $\dfrac{\partial G}{\partial x_{1}}$ and $\dfrac{\partial G}{\partial x_{2}}$, with $c'=c+p/3$, and in the other two cases, take the same polynomials with $c'=c-p/3$; it is trivial to check that these give the correct degrees. Scale these polynomials by the appropriate rational factor so that they are non-zero in characteristic $p$: it is now clear that the resulting polynomials $G_{1},G_{2}$ are singular in characteristic $p$. Furthermore, let $G_{3}=x_{1}^{p}+x_{2}^{p}+x_{3}^{p}$, which is both killed by partial derivatives and an $S_{3}$-invariant, so also singular.

Note that in each case, $G_{1},G_{2},G_{3}$ are linearly independent. Let $J'_c$ be the ideal that they generate. Then, we see that, using the fact that $M_{c}/J'_{c}$ is a complete intersection, $h_{M_{c}/J'_{c}}(t)$ agrees with $h_{L_{c}}(t)$ (as we expect to be computed in \cite{bo}, see above), indeed we must have $J'_{c}=J_{c}$. This completes the proof.
\end{proof}

\begin{rmk}
We conjecture that when $p/2<c<2p/3$, $M_{c}/J_{c}$ is again a complete intersection, with generators of degrees $6c-3p,p,p$. It is easy to check that the degree $6c-3p$ generator is $(x_{1}-x_{2})^{2c-p}(x_{2}-x_{3})^{2c-p}(x_{3}-x_{1})^{2c-p}$ and that one of the degree $p$ generators is $x_{1}^{p}+x_{2}^{p}+x_{3}^{p}$. Furthermore, it is known that in the case of $c=1/2$, the second degree $p$ generator is 
\begin{equation*}
\displaystyle\sum_{\Sym}\dfrac{x_{1}^{p}(x_{1}-x_{2})}{x_{1}-x_{3}};
\end{equation*}
however, the form of this third generator is unclear in general.
\end{rmk}

\begin{conjecture}
Consider $J_{c}(S_{n},\fh,\tau)$ with $\tau$ trivial and $n$ arbitrary, $p>n$, and $c\in\mathbf{F}_{p}$ expressed as an integer with $1\le c\le p-1$. Let $S$ denote the set of rationals numbers of the form $\frac{ap}{b}$ with integers $a,b\in[0,p)$. If no element of $S$ lies between $c$ and $c+1$, then the sum of the degrees of the elements of a minimal set of generators of $J_{c+1}(S_{n},\fh,\tau)$ is exactly $n!$ more than the analogous sum for $J_{c}(S_{n},\fh,\tau)$.
\end{conjecture}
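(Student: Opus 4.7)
The plan is to combine the Jantzen-filtration framework from the proof of Proposition~\ref{prop:specialc} with an explicit accounting of how minimal generators of $J_c$ deform as $c$ varies over consecutive integers in a gap of $S$, then to sum the individual slope contributions via an Euler-characteristic identity. The hypothesis that no element of $S$ lies between $c$ and $c+1$ is exactly what is needed to make this deformation flat.

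First, I would work over $R = \bZ[c]_{(p)}$ and recall from the proof of Proposition~\ref{prop:specialc} that $\det \beta^d_c = N_d \prod_j (a_j c - b_j)$ with $\gcd(a_j, b_j) = 1$. A factor $(a_j c - b_j)$ changes its $p$-adic valuation as real $c$ passes through a value $c^\ast$ with $a_j c^\ast \equiv b_j \pmod p$; by Etingof's classification of characteristic-zero special values for $S_n$ with trivial $\tau$, such $c^\ast$ are precisely (lifts of) the elements of $S$. Under the conjecture's hypothesis, each factor therefore has the same $p$-adic valuation at $c$ and at $c+1$, so by the Smith-normal-form identity $o_{R,p}(\beta^d_c) = \sum_i \dim_{\bF_p} K(i)_p$ used in Proposition~\ref{prop:specialc}, I can lift a minimal generating set of $J_c$ to an $R$-family whose specialization at every integer $c'$ in the interval is again a minimal generating set. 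The degree of each lifted generator is then an affine function of $c$, with an integer slope read off from the corresponding factor of $\det\beta^d_c$.

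Second, I would compute the total slope and match it with $n!$. The setup is an Euler-characteristic argument on a minimal free resolution of $L_c$ over $\text{Sym}(\fh^*)$: the minimal generators of $J_c$ count the graded Betti numbers $\dim \mathrm{Tor}_1^{\text{Sym}(\fh^*)}(L_c, \bF_p)_d$, and the degree-weighted sum can be extracted from the structure of $h_{L_c}(t)$. When $L_c$ is Artinian Gorenstein (as in every case already resolved, including the $n = 3$ theorem above), the sum of generator degrees differs from the socle degree by a constant, so the conjecture reduces to saying the socle degree of $L_c$ grows by $n!$ per unit $c$. On the representation-theoretic side, the $S_n$-equivariance of $\beta$ decomposes singular vectors into isotypic components, and $\sum_\tau (\dim\tau)^2 = |S_n| = n!$ is the natural candidate for the source of the $n!$. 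The $n = 3$ theorem is confirmatory: the slopes of the listed generator degrees are $(0, 3, 3)$ and $(3, 3, 0)$ in the regions treated, each summing to $6 = 3!$.

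The main obstacle is this second step: identifying the total slope with $n!$ for general $n$. For $n = 3$ the generators arise explicitly from partial derivatives of coefficients of $\prod_i (1 - tx_i)^{c'}$, so the slopes are visible by inspection; for $n \ge 4$, no such construction is known and the argument must be structural, most likely relying on a BGG-type resolution of $L_c$ in positive characteristic whose existence in full generality remains open. A reasonable intermediate step before attempting the general proof is to verify the conjecture computationally for $n = 4$ and a few small primes $p$, which should both pin down the correct distribution of slopes among individual generators and suggest the right form of the resolution.
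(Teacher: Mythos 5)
The statement you are addressing is stated in the paper as a \emph{conjecture}: the paper offers no proof, only the remark that it has been checked for $n=3$ and in a few cases for $n=4$. Your text is likewise not a proof but a research plan, and you say so yourself; so the honest assessment is that both of the places where your plan needs to produce an actual argument are genuine gaps, not details. First, the passage from the factorization $\det\beta^d_c=N_d\prod_j(a_jc-b_j)$ to statements about minimal generators of $J_c$ does not go through as described. The valuations $o_{R,p}(\beta^d_c)$ and the spaces $K(i)_p$ control the Hilbert function of $L_c$ (equivalently $\dim (J_c)_d$), but the number and degrees of \emph{minimal} generators are graded Betti numbers, $\dim\operatorname{Tor}_1(L_c,K)_d$, which are not determined by the Hilbert function; knowing that the determinant factors have constant $p$-adic valuation as $c$ is replaced by $c+1$ tells you nothing directly about how a minimal generating set deforms, and your phrase ``read off an integer slope from the corresponding factor'' has no precise meaning --- the $p$-adic valuation of $(a_jc-b_j)$ at an integer $c$ is a congruence condition mod $p$, not something that varies as real $c$ ``passes through'' a point of $S$. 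Your identification of the elements of $S$ with the characteristic-zero special values is also only asserted, not established.

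Second, the step you correctly flag as the main obstacle --- showing the total degree increment is $n!$ --- is carried entirely by two things that are themselves open: the Gorenstein/palindromicity conjecture for $L_c$ (Conjecture 3.7 of the paper), and the numerological observation $\sum_\tau(\dim\tau)^2=n!$, which is offered as a ``natural candidate'' rather than derived from any resolution. Even your confirmatory check for $n=3$ is off in one case: in the region $2p/3<c<p$ the listed generator degrees are $p-3c,\,p-3c,\,p$, whose sum \emph{decreases} by $6$ as $c$ increases by $1$, so the slopes are not uniformly $+3!$ across the three regions and any correct argument must account for the sign (or for the fact that the hypothesis on $S$ excludes such steps). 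In short: there is no proof in the paper to compare against, and your proposal, while it correctly locates the relevant tools (Jantzen-type valuations, Betti numbers, Gorenstein duality), does not close either of the two essential gaps. The concrete next step you suggest --- systematic computation for $n=4$ --- is the right one.
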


\begin{rmk}
The above has been verified for $n=3$ and, in a small number of cases, $n=4$. One possible approach to this conjecture would be to consider some positive characterstic analogue of the shift functors of Berest-Chalykh, see \cite{bc}.
\end{rmk}

\begin{conjecture}
$M_{c}(S_{n},\fh,\tau)/J_{c}(S_{n},\fh,\tau)$ is a Gorenstein algebra for all $n$ whenever $\tau$ is trivial, both when $c\in\mathbf{F}_{p}$ and $c\notin\mathbf{F}_{p}$. More generally, for all $\tau$, $h_{L_{c}(S_{n},\fh,\tau)}(t)$ is palindromic.
\end{conjecture}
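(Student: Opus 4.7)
The plan is to split the conjecture into two parts: the Gorenstein assertion for $\tau$ trivial, which already yields palindromy of the Hilbert series in that case, and the more delicate palindromy claim for arbitrary $\tau$. For $\tau$ trivial and $c\notin\mathbf{F}_{p}$, Proposition \ref{prop:specialc} reduces the problem to characteristic zero, where $L_{c}(S_{n},\fh,\mathrm{triv})$ is known to be a complete intersection and hence Gorenstein. Lifting this from a Hilbert series identity to the ring-theoretic Gorenstein property would require working with an integral model over the localization $R=\bZ[c]_{(p)}$ appearing in the proof of Proposition \ref{prop:specialc}: the socle dimension of $L_{c}$ is controlled by the mod-$p$ rank of the contravariant form in the top degree, and a variant of the argument there should ensure that generic one-dimensionality of the socle is preserved in passing to characteristic $p$ outside the special values.

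For $\tau$ trivial and $c\in\mathbf{F}_{p}$, I would try to extend the complete-intersection construction of Theorem 3.4 from $S_{3}$ to all $S_{n}$. The approach is to produce $n$ Dunkl-singular polynomials in $L_{c}$ by differentiating the generating function $\prod_{i}(1-tx_{i})^{c'}$ used in that proof, with the exponent $c'$ chosen according to the $p$-adic location of $c$, and augmenting by $S_{n}$-invariants such as $\sum_{i}x_{i}^{p}$ when needed. Verifying that these form a regular sequence of the expected degrees, matching the Hilbert series predicted by the preceding conjecture of Section 3 and the anticipated results of \cite{bo}, would immediately yield the complete intersection property and hence the Gorenstein claim.

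For the palindromy statement for general $\tau$, the natural plan is to construct a degree-reversing nondegenerate pairing on $L_{c}(\tau)$. One can try to combine the contravariant form with a Cherednik-algebra involution that swaps $\fh\leftrightarrow\fh^{*}$ and twists $\tau$ by the sign representation $\varepsilon$; in characteristic zero this yields a Koszul-type identification of the form $L_{c}(\tau)_{d}\cong L_{c}(\tau\otimes\varepsilon)_{D-d}^{*}$. When $\tau\otimes\varepsilon\cong\tau^{*}$ this collapses directly to palindromy, and otherwise one would need to augment by a Jantzen filtration argument showing that the radical $J_{c}$ is itself palindromically structured, so that the Hilbert series of each isotypic piece balances correctly.

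The main obstacle I anticipate is precisely this general-$\tau$ case in characteristic $p$. The representation theory of $S_{n}$ modulo $p$ is not semisimple, and irreducible $\mathbf{F}_{p}[S_{n}]$-modules are in general not isomorphic to their sign twists, so any duality of the type sketched above only symmetrizes pairs of modules rather than each $\tau$ individually. A complete proof will therefore likely require a finer analysis tying the modular structure of $\tau$ to the Jantzen layers of the contravariant form on $M_{c}(\tau)$, and it is not a priori clear that the resulting filtration must be palindromic at the level of individual irreducibles.
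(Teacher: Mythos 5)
This statement is an open conjecture in the paper: the author supplies no proof, only the remark that a complete intersection structure is \emph{not} expected in general for trivial $\tau$ (except for $n=3$, where rank $2$ forces Gorenstein to coincide with complete intersection). Your proposal is a research plan rather than a proof, and you candidly flag the general-$\tau$ case as unresolved, so there is no complete argument to certify. Beyond that, two of your concrete steps have genuine problems. First, Proposition \ref{prop:specialc} does not reduce the case $c\notin\mathbf{F}_p$ to characteristic zero; it compares the Hilbert series at algebraic non-prime-field $c$ with the Hilbert series at \emph{generic $c$ in characteristic $p$}. In characteristic zero with generic $c$ the Verma module for the trivial representation is irreducible, so $L_c=M_c$ is infinite-dimensional and there is no finite complete intersection to import; the finite-dimensionality of $L_c$ here is a strictly positive-characteristic phenomenon (governed by results like those of Balagovi\'c--Chen cited in the paper), and in any case an equality of Hilbert series cannot by itself transfer the ring-theoretic Gorenstein (socle-dimension-one) property.

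Second, your plan for $c\in\mathbf{F}_p$ is to extend the $S_3$ construction to exhibit $L_c$ as a complete intersection cut out by $n$ Dunkl-singular polynomials for all $n$. The paper's own remark immediately following the conjecture states that for general $n$ and trivial $\tau$ one does \emph{not} expect $M_c/J_c$ to be a complete intersection; the conjecture is deliberately phrased with the weaker Gorenstein property precisely because the minimal number of generators of $J_c$ is expected to exceed $n$ once $n>3$. So a strategy premised on producing a length-$n$ regular sequence is expected to fail, and even for $n=3$ the paper's Theorem is conditional on unpublished Hilbert series computations of Bezrukavnikov--Okounkov. Your diagnosis of the final obstacle --- that irreducible $\mathbf{F}_p[S_n]$-modules need not be self-dual up to sign twist, so a contravariant/Koszul duality only pairs $L_c(\tau)$ with $L_c$ of a different lowest weight --- is accurate and is exactly why the palindromy claim for arbitrary $\tau$ remains open.
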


\begin{rmk}
In general, when $\tau$ is trivial, we do not expect $M_{c}/L_{c}$ to be a complete intersection. However, in the case that $n=3$, because $S_{3}$ has rank 2, the above conjecture immediately implies that $M_{c}/J_{c}$ is a complete intersection.
\end{rmk}

\section{Dihedral Groups $G(m,m,2)$}\label{dihedral}
\indent\indent In this section, we construct singular polynomials of Dunkl operators for the rank 2 dihedral group $D_{m}=G(m,m,2)$. Observe that we must have $p\nmid m$. Suppose our field $K$ contains all $m$th roots of unity, and let $\zeta$ be a fixed primitive $m$-th root of unity. The reflections in $D_{m}$ may be realized as acting by the $2 \times 2$ matrices
\begin{equation*}
\begin{bmatrix}
 &\zeta^{-k}\\
\zeta^{k}&  
\end{bmatrix}
\end{equation*}
for $0\le k<m$. When $m$ is odd, all such
reflections lie in the same conjugacy class. However, when $m$ is
even, there are two conjugacy classes or reflections, given by those
with $i$ odd and $i$ even.

\begin{theorem}
When $\tau$ is trivial and $c$ is generic, $x_{1}^{p}x_{2}^{p},x_{1}^{pm}+x_{2}^{pm}\in J_{c}$.
\end{theorem}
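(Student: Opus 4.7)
The plan is to verify directly that each of these two polynomials is singular, i.e., annihilated by both Dunkl operators $D_{y_1}$ and $D_{y_2}$; by the remark following the definition of the Dunkl operator, any singular polynomial lies in $J_c$, so this suffices (and in fact does not use that $c$ is generic).

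First I would make the reflection action explicit. From the given matrix form of $s_k$, one reads off $s_k(x_1) = \zeta^{-k} x_2$ and $s_k(x_2) = \zeta^{k} x_1$. Choosing $\alpha_{s_k} = x_1 - \zeta^{-k} x_2$ and noting $\lambda_{s_k} = -1$, the Dunkl operator on $P \in \text{Sym}(\fh^*)$ with $\tau$ trivial takes the form
\[
D_{y_i}(P) \;=\; \partial_i P \;-\; \sum_{k} c_k \, \langle y_i, \alpha_{s_k}\rangle \cdot \frac{P - s_k(P)}{\alpha_{s_k}}.
\]
The key observation is that if $P$ is $D_m$-invariant then each divided difference $(P - s_k(P))/\alpha_{s_k}$ vanishes, and so $D_{y_i}P = \partial_i P$. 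The task therefore reduces to checking two things for each candidate $P$: (i) $s_k(P)=P$ for every reflection $s_k$, and (ii) $\partial_1 P = \partial_2 P = 0$ in characteristic $p$.

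For $P = x_1^p x_2^p$, invariance is immediate from $s_k(x_1^p x_2^p) = \zeta^{-kp}\zeta^{kp} x_1^p x_2^p = x_1^p x_2^p$, and each partial derivative carries a factor of $p$. For $P = x_1^{pm} + x_2^{pm}$, the relation $\zeta^m = 1$ gives $s_k(P) = \zeta^{-kpm} x_2^{pm} + \zeta^{kpm} x_1^{pm} = P$, and again the partials each contain a factor of $p$. In both cases the two hypotheses above are met, hence both polynomials are singular and therefore lie in $J_c$. There is no genuine obstacle: the polynomials were engineered to be $D_m$-invariant with derivatives killed by the characteristic, so the Dunkl operators annihilate them automatically. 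The only subtlety worth double-checking is the sign/phase bookkeeping in the reflection action on $\fh^*$, since this uses the dual action and the fact that each $s_k$ is an involution.
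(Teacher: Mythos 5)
Your proof is correct and follows essentially the same route as the paper: both arguments reduce to observing that each polynomial is fixed by every reflection (so all divided-difference terms in the Dunkl operator vanish) and has vanishing partial derivatives in characteristic $p$. The paper packages this slightly more compactly by writing the polynomials as $(x_1x_2)^p$ and $(x_1^m+x_2^m)^p$, i.e.\ as $p$-th powers of $D_m$-invariants, but the substance is identical.
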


\begin{proof}
We re-write these polynomials as $(x_{1}x_{2})^{p}$ and $(x_{1}^{m}+x_{2}^{m})^{p}$. Both are killed by partial derivatives, and it is trivial to check that $x_{1}x_{2}$ and $x_{1}^{m}+x_{2}^{m}$ are $D_{m}$-invariants, so these polynomials are both singular.
\end{proof}

We now consider $\tau$ with $\dim(\tau)>1$: otherwise $L_{c}(\tau)\cong L_{c}(\text{triv})$ (up to a twist and reparametrization). The irreducible representations of $D_{m}$ are two-dimensional, and are denoted $\rho_{a}$, $1\le a<m/2$, where the action of the reflections is given by
\begin{align*}
\begin{bmatrix}
 &\zeta^{-k}\\
\zeta^{k}&  
\end{bmatrix}
\mapsto
\begin{bmatrix}
 &\zeta^{-ak}\\
\zeta^{ak}&  
\end{bmatrix}
\end{align*}

Let $\{e_{1},e_{2}\}$ be a basis for $\tau$.

\begin{theorem}
Let $\tau=\rho_{a}$, with $a>p$, and suppose $m$ is odd. Then, $x_{1}^{p}\otimes e_{1},x_{1}^{p}\otimes e_{2},x_{2}^{p}\otimes e_{1},x_{2}^{p}\otimes e_{2}\in J_{c}$.
\end{theorem}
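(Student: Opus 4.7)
The plan is to verify that each of the four proposed vectors is singular, i.e., annihilated by both Dunkl operators $D_{y_1}$ and $D_{y_2}$; the remark after Theorem~2.5 then places them in $J_c$.

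First I would write down the Dunkl operators explicitly. For the reflection $s_k$ with matrix $\begin{bmatrix} & \zeta^{-k} \\ \zeta^k & \end{bmatrix}$, one finds $\alpha_{s_k} = x_1 - \zeta^{-k} x_2$ and $s_k(\alpha_{s_k}) = -\alpha_{s_k}$, so the eigenvalue $\lambda_{s_k} = -1$; the prefactor $\frac{2\langle y_i, \alpha_{s_k}\rangle}{1-\lambda_{s_k}}$ then simplifies to $\langle y_i, \alpha_{s_k}\rangle$, which equals $1$ for $i=1$ and $-\zeta^{-k}$ for $i=2$. Because $m$ is odd, all reflections lie in a single conjugacy class, giving one parameter $c$. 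In characteristic $p$, $\partial x_\ell^p/\partial x_\ell = 0$, so $D_{y_i}(x_\ell^p \otimes e_r)$ is given entirely by its reflection-sum part.

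The next step is to apply the factorizations
\begin{align*}
\frac{x_1^p - s_k(x_1)^p}{\alpha_{s_k}} = \sum_{t=0}^{p-1} \zeta^{-kt}\, x_1^{p-1-t} x_2^t, \qquad \frac{x_2^p - s_k(x_2)^p}{\alpha_{s_k}} = -\sum_{t=0}^{p-1} \zeta^{k(p-t)}\, x_1^{p-1-t} x_2^t,
\end{align*}
together with $s_k(e_1) = \zeta^{ak} e_2$ and $s_k(e_2) = \zeta^{-ak} e_1$. Substituting and interchanging the sums over $t$ and $k$, each $D_{y_i}(x_\ell^p \otimes e_r)$ becomes a combination of monomials $x_1^{p-1-t} x_2^t \otimes e_{r'}$ whose coefficient is a scalar multiple of the geometric sum $\sum_{k=0}^{m-1} \zeta^{k N(t)}$, where $N(t) \in \bZ$ depends on $a$, $t$, and the triple $(i, \ell, r)$.

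The crux of the proof — really the only thing to check — is that in each of the eight $(i,\ell,r)$ combinations one has $N(t) \not\equiv 0 \pmod m$ for all $t \in \{0, \ldots, p-1\}$, so that every such geometric sum vanishes. A direct case analysis shows $N(t)$ has one of the shapes $\pm(a-t)$, $\pm(a-t-1)$, $\pm(a+t)$, $\pm(a+t+1)$, $\pm(a+p-t)$, or $\pm(a+p-t-1)$. The hypothesis $a > p$ combined with $a < m/2$ (forced by the fact that $\rho_a$ is defined only for $1 \le a < m/2$) gives $p < m/2$, from which one reads off that each such $N(t)$ is a nonzero integer with $|N(t)| < m$; in particular $m \nmid N(t)$. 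This kills every coefficient and shows that all four proposed vectors are singular, and hence lie in $J_c$.
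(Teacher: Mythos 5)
Your proposal is correct and follows essentially the same route as the paper: since the partial derivatives of $x_\ell^p$ vanish in characteristic $p$, each $D_{y_i}(x_\ell^p\otimes e_r)$ reduces to a sum whose monomial coefficients are geometric sums $\sum_{k=0}^{m-1}\zeta^{kN}$, and the hypotheses $p<a<m/2$ force $0<|N|<m$ in every case so that each such sum vanishes. The paper carries out the same four-case verification for $D_1$ with exponents $a+\ell$, $-(a-\ell)$, $a-\ell-1$, $-(a+\ell+1)$ and appeals to symmetry for $D_2$, exactly matching your bound-checking step.
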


\begin{proof}
The partial derivatives of these vectors are zero, so the Dunkl operator $D_{1}$ acts by
\begin{center}
$-\displaystyle\sum_{k=0}^{m-1}c_{s}\dfrac{1}{x_{1}-\zeta^{k}x_{2}}(1-s)\otimes s$.
\end{center}

Since $m$ is odd, $c_{s}$ is a constant $c$. We now compute
\begin{align*}
D_{1}(x_{1}^{p}\otimes e_{1})&=-\displaystyle\sum_{k=0}^{m-1}c\dfrac{1}{x_{1}-\zeta^{k}x_{2}}(x_{1}^{p}-(\zeta^{k}x_{2})^{p})\otimes \zeta^{ak}e_{2}\\
&=-\displaystyle\sum_{k=0}^{m-1}c\zeta^{ak}\left(\displaystyle\sum_{\ell=0}^{p-1}\zeta^{\ell k}x_{1}^{p-1-\ell}x_{2}^{\ell}\right)\otimes e_{2}.
\end{align*}

Consider the $x_{1}^{p-1-\ell}x_{2}^{\ell}$ coefficient in the first component: we wish to show that it is zero. This coefficient is
\begin{equation*}
-\displaystyle\sum_{k=0}^{m-1}c\zeta^{ak}\cdot\zeta^{k\ell}=-\displaystyle\sum_{k=0}^{m-1}c(\zeta^{a+\ell})^{k},
\end{equation*}

Note that $0<a+\ell<a+p<a+a<m$. Thus, $m\nmid(a+\ell)$, and $\zeta^{(a+\ell)}\neq1$. Our sum is thus the sum of $d$-th roots of unity for some $d>1$, where $d|m$, each $d$-th root of unity appearing $m/d$ times; the sum is thus equal to zero. In a similar way, we compute
\begin{equation*}
D_{1}(x_{1}^{p}\otimes e_{2})=-\displaystyle\sum_{k=0}^{m-1}c\zeta^{-ak}\left(\displaystyle\sum_{\ell=0}^{p-1}\zeta^{k\ell}x_{1}^{p-1-\ell}x_{2}^{\ell}\right)\otimes e_{1},
\end{equation*}
and we wish to check that
\begin{equation*}
\displaystyle\sum_{k=0}^{m-1}c(\zeta^{-a+\ell})^{k}=0.
\end{equation*}

Again, we have $\zeta^{-a+\ell}\neq1$ because $-a+\ell<-p+\ell<0$, and furthermore it is clear that $|-a+\ell|\le a+\ell<m$. Next,
\begin{align*}
D_{1}(x_{2}^{p}\otimes e_{1})&=-\displaystyle\sum_{k=0}^{m-1}c\dfrac{-\zeta^{-k}}{x_{2}-\zeta^{-k}x_{1}}(x_{2}^{p}-(\zeta^{-k}x_{1})^{p})\otimes\zeta^{ak}e_{2}\\
&=\displaystyle\sum_{k=0}^{m-1}c\zeta^{ak-k}\left(\displaystyle\sum_{\ell=0}^{p-1}\zeta^{-k\ell}x_{2}^{p-1-\ell}x_{1}^{\ell}\right)\otimes e_{2},
\end{align*}
and we need to check that
\begin{equation*}
\displaystyle\sum_{k=0}^{m-1}c(\zeta^{a-\ell-1})^{k}=0,
\end{equation*}
which follows from the fact that $m>a>a-\ell-1>p-(p-1)-1=0$. Finally,
\begin{equation*}
D_{1}(x_{2}^{p}\otimes e_{2})=0
\end{equation*}
is equivalent to
\begin{equation*}
\displaystyle\sum_{k=0}^{m-1}c(\zeta^{-a-\ell-1})^{k}=0,
\end{equation*}
which in turn follows from $0>-a-\ell-1\ge-a-(p-1)-1>-2a>-m$. In exactly the same way, $D_{2}$ kills all four vectors, so we're done.
\end{proof}

\begin{rmk}
The above proof fails for $a=p$ because in the third case, we can have $a-\ell-1=p-(p-1)-1=0$.
\end{rmk}

\begin{theorem}
Let $\tau=\rho_{p}$, and suppose $m$ is odd. Then, $x_{1}^{p}\otimes e_{1},x_{2}^{p}\otimes e_{2},x_{1}^{3p}\otimes e_{2},x_{2}^{3p}\otimes e_{1}\in J_{c}$.
\end{theorem}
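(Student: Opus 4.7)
The approach is to extend the previous theorem's technique to the borderline case $a = p$. For each of the four vectors the partial derivative term in the Dunkl operator will vanish in characteristic $p$, so $D_1$ and $D_2$ reduce to sums over reflections. As in the prior proof, expanding $\frac{x_1^{Np} - \zeta^{Npk} x_2^{Np}}{x_1 - \zeta^k x_2}$ as a geometric sum will reduce each monomial coefficient to an inner sum $\sum_{k} \zeta^{kN}$ for an integer $N$ depending on the monomial and on $a$; this sum equals $m$ when $m \mid N$ and vanishes otherwise. Since $\rho_p$ is defined only for $p < m/2$, we have $m > 2p$, so at most one monomial per Dunkl computation can survive.

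I would first verify that $x_1^p \otimes e_1$ and $x_2^p \otimes e_2$ are singular by running the four sub-computations of the prior proof at $a = p$: the exponents $N$ appearing there (namely $a + \ell$ and $-a - \ell - 1$, and their $D_2$ analogues obtained by symmetry) stay strictly between $0$ and $m$ in absolute value, so every inner sum vanishes. The remark preceding this theorem observes that the problematic exponents $a - \ell - 1$ and $-a + \ell$, which cross zero when $a = p$, appear only in the Dunkl computations for $x_2^p \otimes e_1$ and $x_1^p \otimes e_2$, not for the pair above.

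For $x_1^{3p} \otimes e_2$, computing $D_1$ leaves a single surviving term at $\ell = p$, namely a scalar multiple of $x_1^{2p-1} x_2^p \otimes e_1 = x_1^{p-1} x_2^p \cdot (x_1^p \otimes e_1)$, while computing $D_2$ leaves a surviving term at $\ell = p - 1$, namely a scalar multiple of $x_1^{2p} x_2^{p-1} \otimes e_1 = x_1^p x_2^{p-1} \cdot (x_1^p \otimes e_1)$. Both lie in the $\text{Sym}(\fh^*)$-submodule generated by $x_1^p \otimes e_1$, hence in $J_c$ by the preceding step, so $x_1^{3p} \otimes e_2 \in J_c$ by the criterion in the remark following the definition of the Dunkl operator. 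The case $x_2^{3p} \otimes e_1$ follows by the symmetric argument. The main delicate point is the choice of exponent $3p$: it is the smallest multiple of $p$ for which both surviving monomials contain an explicit factor of $x_1^p$ (respectively $x_2^p$) that enables reduction to the singular vector from the previous step; for example, taking $2p$ would leave a term $x_1^{p-1} x_2^p \otimes e_1$ in the $D_1$ computation that lacks such a factor.
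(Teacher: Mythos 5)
Your proposal is correct and follows essentially the same approach as the paper: check the two degree-$p$ vectors directly via the vanishing of the root-of-unity sums (using $m>2p$), then show the Dunkl operators send the degree-$3p$ vectors into the submodule generated by those singular vectors. If anything you are slightly more thorough than the paper, which only writes out $D_1(x_1^{3p}\otimes e_2)$ and $D_2(x_2^{3p}\otimes e_1)$ and leaves the other two Dunkl computations (which you correctly identify as surviving at $\ell=p-1$ with term $x_1^{2p}x_2^{p-1}\otimes e_1$, etc.) implicit.
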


\begin{proof}
We can check that the first two generators are killed by Dunkl operators using the same logic as in the previous theorem, noting that in these two cases the necessary strict inequalities still hold. To check that the other two vectors are in $J_{c}$, it is enough to show that that applying Dunkl operators gives multiples of $x_{1}^{p}\otimes e_{2},x_{2}^{p}\otimes e_{1}\in J_{c}$.

We see that
\begin{align*}
D_{1}(x_{1}^{3p}\otimes e_{2})=-\displaystyle\sum_{k=0}^{m-1}c\zeta^{-pk}\left(\displaystyle\sum_{\ell=0}^{3p-1}\zeta^{k\ell}x_{1}^{3p-1-\ell}x_{2}^{\ell}\right)\otimes e_{1},
\end{align*}
and the coefficient on the $x_{1}^{3p-1-\ell}x_{2}^{\ell}$ term in the first component is
\begin{align*}
\displaystyle\sum_{k=0}^{m-1}c(\zeta^{-p+\ell})^{k}.
\end{align*}

We claim that $D_{1}(x_{1}^{3p}\otimes e_{2})$ is a multiple of $x_{1}^{p}\otimes e_{1}\in J_{c}$. To check this, it suffices to show that if the above coefficient is non-zero, then $3p-1-\ell\ge p$. Clearly, $-p+\ell\ge-p>-m$, and also $-p+l\le -p+3p-1=2p-1<m$. Now, if the coefficient on $x_{1}^{3p-1-\ell}x_{2}^{\ell}$ is non-zero, we have $\zeta^{-p+\ell}=1$, and $m|(-p+\ell)$. Therefore, we must have $\ell=p$, and indeed $3p-1-l=2p-1\ge p$. It follows that $x_{1}^{3p}\otimes e_{2}\in J_{c}$ because $x_{1}^{p}\otimes e_{1}\in J_{c}$.

Finally, we compute
\begin{align*}
D_{2}(x_{2}^{3p}\otimes e_{1})=\displaystyle\sum_{k=0}^{m-1}c\zeta^{pk-k}\left(\displaystyle\sum_{\ell=0}^{3p-1}\zeta^{-k\ell}x_{2}^{3p-1-\ell}x_{1}^{\ell}\right)\otimes e_{2},
\end{align*}
and the $x_{2}^{3p-1-\ell}x_{1}^{\ell}$ coefficient is
\begin{align*}
\displaystyle\sum_{k=0}^{m-1}c(\zeta^{p-\ell-1})^{k}.
\end{align*}

We claim that $D_{2}(x_{2}^{3p}\otimes e_{1})$ is a multiple of $x_{2}^{p}\otimes e_{2}\in J_{c}$. Note that $p-\ell-1\ge p-(3p-1)-1=-2p>-m$, and $p-\ell-1\le p-1<m$. Thus, if our coefficient is non-zero, we must have $\zeta^{p-\ell-1}=1$ and $\ell=p-1$, so that $3p-1-\ell=2p>p$. It follows that $x_{2}^{3p}\otimes e_{1}\in J_{c}$ because $x_{2}^{p}\otimes e_{2}\in J_{c}$, and the proof is complete.
\end{proof}

\begin{rmk}
Assuming that $J_{c}$ is indeed generated by the aforemetioned singular vectors in each of the three cases below (which we conjecture to be the case based on computational evidence), we get the following Hilbert series:
\begin{align*}
\tau \text{ trivial: } h_{L_{c}}(t)&=\left(\dfrac{1-t^{p}}{1-t}\right)\left(\dfrac{1-t^{pm}}{1-t}\right)\\
\tau=\rho_{a}, a>p: h_{L_{c}}(t)&=2\left(\dfrac{1-t^{p}}{1-t}\right)^{2}\\
\tau=\rho_{p}: h_{L_{c}}(t)&=2\left(\dfrac{1-t^{p}}{1-t}\right)\left(\dfrac{1-t^{3p}}{1-t}\right)
\end{align*}
\end{rmk}

\section{$S_{n}$ with $p|n$ and $c$ generic}\label{symmetricgeneric}

\begin{theorem}
Consider $G=S_{n}$, where $n$ is even, with $\tau$ trivial and $c$ generic, and $p=2$. Then, $M_{c}/J_{c}$ is a complete intersection with $J_{c}$ generated by $n-1$ elements of degree $2$ and one of degree $4$. Thus, $h_{L_{c}}(t)=(1+t)^{n}(1+t^{2})$.
\end{theorem}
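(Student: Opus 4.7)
My plan is to exhibit $n-1$ singular polynomials of degree $2$ and one polynomial in $J_c$ of degree $4$, verify they form a regular sequence with Hilbert series $(1+t)^n(1+t^2)$, and argue that the generated ideal equals $J_c$ via the contravariant form. The main obstacle is the last step.

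For the degree $2$ generators, I would take $g_{1j} = (x_1 + x_j)\bigl[(x_1 + x_j) + c e_1\bigr]$ for $j = 2, \ldots, n$, where $e_1 = x_1 + \cdots + x_n$. A direct computation of $D_k g_{1j}$ in characteristic $2$, exploiting $n-1 \equiv 1$, $n-2 \equiv 0$, and $2 = 0$, shows each is annihilated by every Dunkl operator: the contributions from $s_{k,1}$ and $s_{k,j}$ combine to cancel $\partial_k g_{1j} = c(x_1 + x_j)$. Linear independence follows from the leading terms $x_j^2$. For the degree $4$ element, I claim $x_1^4 \in J_c$. Since $x_1^4 + x_k^4 = (x_1 + x_k)^4$ in characteristic $2$, we have $D_k x_1^4 = c(x_1 + x_k)^3$ for $k \neq 1$. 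Modulo $g_{1k}$, the relation $(x_1 + x_k)^2 \equiv c e_1(x_1 + x_k)$ iterates to $(x_1 + x_k)^3 \equiv c^2 e_1^2 (x_1 + x_k)$, and $e_1^2 = \sum_i x_i^2$ lies in the ideal since a short calculation gives $\sum_{j \geq 2} g_{1j} = (1+c) e_1^2$; the analogous computation handles $D_1 x_1^4$, which is a sum of such terms.

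To verify the complete intersection, change variables $y_i = x_1 + x_i$ for $i = 2, \ldots, n$, so that $e_1 = \sum_i y_i$ (using $n$ even in characteristic $2$) and $g_{1j} = y_j(y_j + c e_1)$ involves only the $y$'s. For each $A \subseteq \{2, \ldots, n\}$, the potential component of the zero locus with $y_j = 0$ on $A$ and $y_j + c e_1 = 0$ on $A^c$ reduces to a linear system on $A^c$ with matrix $I + cJ$ (identity plus $c$ times the all-ones matrix), of determinant $1 + c|A^c| \neq 0$ for generic $c$. Hence $\{g_{1j}\}$ is a regular sequence cutting out the origin in $\mathbb{A}^{n-1}_y$, and $x_1^4$ is plainly a nonzerodivisor in $K[x_1] \otimes_K K[y]/(g_{1j})$, yielding a regular sequence of length $n$ in $R$ with quotient Hilbert series $(1-t^2)^{n-1}(1-t^4)/(1-t)^n = (1+t)^n(1+t^2)$.

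The hard part is upgrading $(g_{1j}, x_1^4) \subseteq J_c$ to equality. I would attack this via the contravariant form $\beta_c$, whose radical is exactly $J_c$. Since the inclusion is in hand, the form descends to a symmetric bilinear form on the finite-dimensional quotient $M_c/(g_{1j}, x_1^4)$, and equality holds iff this descended form is nondegenerate. The quotient factors as $K[x_1]/(x_1^4) \otimes_K K[y]/(y_j(y_j + c e_1))$, and both factors admit explicit monomial bases, so the Gram matrix of $\beta_c$ can in principle be computed degree by degree and its determinant shown to be a nonzero rational function of $c$. The main difficulty is carrying this out cleanly; an alternative route would use the Gorenstein structure of the complete intersection, which produces a $1$-dimensional socle in degree $n+2$ and a palindromic Hilbert series, to match the quotient against representation-theoretic constraints on $L_c$ and conclude irreducibility.
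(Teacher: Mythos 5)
Your degree-2 elements are, after expanding in characteristic $2$, exactly the paper's $f_{1j}=c(x_1+x_j)(\sum_k x_k)+x_1^2+x_j^2$, and your identity $\sum_{j\ge 2}g_{1j}=(1+c)e_1^2$ recovers the paper's extra generator $g=\sum_i x_i^2$; the treatment of $x_1^4$ via $D_kx_1^4=c(x_1+x_k)^3$ and the reduction of $(x_1+x_k)^3$ into the degree-2 ideal also matches. Your verification of the complete-intersection property is a genuinely different and rather clean route: the paper deduces finite-dimensionality from $x_i^4\in I$ for all $i$ and then invokes Krull dimension, whereas you change variables to $y_j=x_1+x_j$ and check that the quadrics cut out only the origin by analyzing the components $I+cJ$; both are valid.

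The genuine gap is the last step, which you correctly identify as the hard part but do not close. Knowing that $I\subseteq J_c$ and that $h_{A/I}(t)=(1+t)^n(1+t^2)$ only bounds $h_{L_c}$ from above; nondegeneracy of the descended form on $A/I$ is indeed equivalent to $I=J_c$, but proposing to compute the Gram determinant ``degree by degree'' is not an argument, and the determinant is not obviously tractable (this is essentially the original problem restated). The paper's resolution uses two specific inputs you are missing: first, \cite[Proposition 3.3]{chen} forces $h_{A/J}(t)=(1+t)^n h(t^2)$ for some polynomial $h$, which together with $h_{A/J}\le h_{A/I}$ coefficientwise leaves only the two possibilities $h=1+t$ (i.e.\ $I=J$) and $h=1$; second, the case $h=1$ would require $n$ linearly independent singular quadratics, and this is ruled out by a specialization argument at $c=0$: any putative extra quadratic can be normalized to $\phi=\sum_i\alpha_i(c)x_i^2+c\sum_{i\ne j}a_{ij}(c)x_ix_j$ with $\alpha_1(0)=1$, $\alpha_j(0)=0$ for $j>1$, and the coefficient of $x_1$ in $D_1\phi$ evaluates to $1$ at $c=0$, contradicting singularity. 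Without the divisibility constraint from \cite{chen} (or an equivalent lower bound on $h_{L_c}$), your argument does not exclude the possibility that $J_c$ strictly contains $I$ in some higher degree, so the proof as proposed is incomplete.
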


\begin{proof}
Write $J=J_{c}$. Given $i<j$, define $f_{ij} = c(x_i+x_j)(\sum_k x_k) + x_i^2 + x_j^2$ and let $g = \sum_i x_i^2$. Let $I$ be the ideal generated by $\{g, f_{1,2}, \dots, f_{1,n-1}, x_1^4\}$. We claim that $J = I$.

We first check that $f_{ij},g$ are killed by Dunkl operators, so that $g \in J$ and $f_{i,j} \in J$ for all $i,j$. Clearly, $g$ is killed by Dunkl operators, since all of its partial derivatives are zero and it is a $S_{n}$-invariant. Now, consider $D_{1}$ applied to the $f_{i,j}$. When $i,j>1$, we have that $D_{1}f_{i,j}$ is equal to
\begin{center}
$c(x_{i}+x_{j})-c\left(\dfrac{1}{x_{1}-x_{i}}(c(x_{i}-x_{1})\left(\displaystyle\sum_k x_k\right)+x_i^2-x_1^2)+\dfrac{1}{x_{1}-x_{j}}(c(x_{j}-x_{1})\left(\displaystyle\sum_k x_k\right)+x_j^2-x_1^2)\right)$,
\end{center}
which we see vanishes in characteristic 2. Also
\begin{center}
$D_{1}f_{1,j}=c(x_{1}+x_{j})+c\left(\displaystyle\sum_k x_k\right)-c\displaystyle\sum_{\ell\neq1,j}\dfrac{1}{x_{1}-x_{l}}\left(c\left(\displaystyle\sum_k x_k\right)(x_1-x_\ell)+x_\ell^2-x_1^2\right)=0$,
\end{center}
since $n$ is even, so it follows that the $f_{i,j}$ are singular. Note that  $\{g,f_{i,j}\}$ is linearly dependent and one possible basis is $S=\{g, f_{1,2}, \dots, f_{1,n-1}\}$. Linear independence of $S$ follows immediately from noting that for $i=2,\ldots,n-1$, the only appearance of an $x_{i}^{2}$ term in a linear combination of the elements of $S$ is in $f_{1,i}$, and the only appearance of an $x_{n}^{2}$ term is in $g$. To check that $S$ spans our ideal, note that $(c+1)g+f_{1,2}+\cdots+f_{1,n-1}=f_{1,n}$ and $f_{1,i}+f_{1,j}=f_{i,j}$.

Now, note that $(x_i+x_j)^3\in J$, since
\begin{center}
$(x_i+x_j)^3 = c^2(x_i+x_j) g + (x_i+x_j + c(\sum_k x_k)) f_{i,j}$
\end{center}
Also, when $k\neq1$,
\begin{align*}
D_{k}x_{1}^{4}=-c\dfrac{x_{1}^{4}-x_{k}^{4}}{x_{k}-x_{1}}=c(x_{1}+x_{k})^{3},
\end{align*}
and furthermore
\begin{align*}
D_{1}x_{1}^{4}=c\displaystyle\sum_{k\neq1}\dfrac{x_{1}^{4}-x_{k}^{4}}{x_{1}-x_{k}}=c\displaystyle\sum_{k\neq1}(x_{1}+x_{k})^{3},
\end{align*}
so it follows that $x_1^4 \in J$. So we have shown that $I \subseteq
J$. Furthermore, we see that $x_i^4 \in I$ for all $i$ since we have
\begin{center}
$x_1^4 + x_i^4 = (x_1+x_i)(x_1+x_i)^3$.
\end{center}

This implies that $A/I$ is a finite-dimensional vector space over ${\bf
  F}_2(c)$: for example, any monomial of degree at least $3n+1$ must
have some variable $x_i$ raised to at least the 4th power, so it is
divisible by $i$ and must belong to $I$. Hence $A/I$ can only be
nonzero for degrees at most $3n$. Since $A$ has Krull dimension $n$,
and $I$ has $n$ generators, we conclude that $I$ is a complete
intersection, and from the degrees, its Hilbert series is
$h_{A/I}(t) = (t+1)^n (t^2+1)$. Since $I \subseteq J$, we know that $h_{A/I}(t) \ge
h_{A/J}(t)$ coefficientwise. By \cite[Proposition 3.3]{chen}, $h_{A/J}(t) = (t+1)^n
h(t^2)$ for some $h$. So the only possibilities are $h=1+t$, in
which case $I = J$, or $h(t) = 1$ (the case $h(t) = t$ is not
allowed since $h_{A/J}(0) = 1$). If $h(t) = 1$, then $J_2$ contains
$n$ linearly independent polynomials.

To finish, it suffices to check that $J$ only contains $n-1$ linearly independent polynomials of degree
2. Suppose not. When $c=0$, $J_2$ is spanned by $\{x_1^2, \dots, x_n^2
\}$. By considering the limit $c\to 0$, we see that $J$ contains a
generator of the form $\phi = \sum_i \alpha_i(c) x_i^2 + c(\sum_{i \ne
  j} a_{i,j}(c) x_ix_j)$ where $\alpha_i(c)$ and $a_{i,j}(c)$ are
polynomials in $c$ with $\alpha_1(0) = 1$ and $\alpha_j(0) = 0$ for
$j>1$, and we take $a_{i,j}(c)=a_{j,i}(c)$. However, note that
\begin{align*}
D_1\phi&=c\displaystyle\sum_{j\neq1}\alpha_{1,j}x_{j}-c\displaystyle\sum_{j\neq1}\dfrac{1}{x_{1}-x_{j}}\left((\alpha_{1}-\alpha_{j})(x_{1}^{2}-x_{j}^{2})+c\displaystyle\sum_{\ell\neq1,j}(\alpha_{1,\ell}-\alpha_{\ell,j})x_{\ell}(x_{1}-x_{j})\right)\\
&=c\displaystyle\sum_{j\neq1}\left(\alpha_{1,j}x_{j}-(\alpha_{1}-\alpha_{j})(x_{1}+x_{j})-c\displaystyle\sum_{\ell\neq1,j}(\alpha_{1,\ell}-\alpha_{\ell,j})x_{\ell}\right)\\
&=c\left(\displaystyle\sum_{j\neq1}\left(\alpha_{1,j}+\alpha_{1}+\alpha_{j}+c\displaystyle\sum_{\ell\neq1,j}\alpha_{\ell,j}\right)x_{j}+\left(\displaystyle\sum_{j}\alpha_{j}\right)x_{1}\right).
\end{align*}
If $\phi\in J$, we have that the above is identically zero, and since $c$ is indeterminate,
\begin{center}
$\displaystyle\sum_{j\neq1}\left(\alpha_{1,j}+\alpha_{1}+\alpha_{j}+c\displaystyle\sum_{\ell\neq1,j}\alpha_{\ell,j}\right)x_{j}+\left(\displaystyle\sum_{j}\alpha_{j}\right)x_{1}$
\end{center}
must also be identically zero. However, the coefficient on the $x_{1}$ term evaluates to 1 when $c=0$, since $\alpha_{1}(0)=1$ and $a_{j}(0)=0$ for $j>1$. Therefore, $\phi$ is not singular, and we have reached a contradiction. This completes the proof.
\end{proof}

The conjectured generalization of the preceding theorem is the following:
\begin{conjecture}
Consider $G=S_{n}$, where $p|n$, with $\tau$ trivial and $c$ generic. Then, $M_{c}/J_{c}$ is a complete intersection with $J_{c}$ generated by $n-1$ elements of degree $p$ and one of degree $p^{2}$. 
\end{conjecture}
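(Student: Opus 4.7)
The plan is to mirror the $p=2$ proof: construct explicit Dunkl-singular polynomials, identify them with a generating set for $J_c$, and match Hilbert series. Write $A = \text{Sym}(\fh^*)$. I would produce $n-1$ linearly independent degree-$p$ singular polynomials together with the single degree-$p^2$ polynomial $h = x_1^{p^2}$, show that the ideal $I$ they generate is a complete intersection with the expected Hilbert series $(1+t+\cdots+t^{p-1})^{n-1}(1+t+\cdots+t^{p^2-1})$, and then conclude via a comparison with $h_{A/J_c}(t)$ that $I = J_c$.

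For the generators of degree $p$, the natural starting point is $g = \sum_i x_i^p$, which is $S_n$-invariant and killed by partial derivatives, hence singular. The remaining $n-2$ polynomials should be analogs of the $f_{ij} = c(x_i+x_j)(\sum_k x_k) + x_i^2 + x_j^2$ of the $p=2$ case; for generic $c$, one seeks polynomials $f_{ij}$ of the schematic form $(x_i-x_j)^p + c\cdot q_{ij}$, where $q_{ij}$ is a degree-$p$ polynomial chosen so that $D_k f_{ij}=0$ for all $k$. Finding the correct $q_{ij}$ is the principal obstacle: the answer must degenerate to $x_i^p - x_j^p$ as $c\to 0$ (matching the fact that the singular space at degree $p$ becomes $\mathrm{span}\{x_1^p,\ldots,x_n^p\}$ at $c=0$), while satisfying the delicate cross-cancellations needed for Dunkl-singularity. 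I would attack this with undetermined coefficients, organized by $S_n$-weight, and by looking for an expression built from symmetric functions of $x_i,x_j$ together with the power sum $p_1 = \sum_k x_k$, guided by the observation that in characteristic $p$ the identity $x_i^p - x_j^p = (x_i-x_j)^p$ makes $(x_i - x_j)$ a natural one-parameter deformation variable.

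For the degree-$p^2$ generator, the key computation is that for $k\ne 1$, $D_k(x_1^{p^2}) = \pm c(x_1-x_k)^{p^2-1}$, using $x_1^{p^2}-x_k^{p^2} = (x_1-x_k)^{p^2}$ by the double Frobenius, and $D_1(x_1^{p^2}) = \mp c\sum_{k\ne 1}(x_1-x_k)^{p^2-1}$. One then needs to exhibit $(x_i-x_j)^{p^2-1}$ as an element of the ideal generated by $g$ and the $f_{ij}$, generalizing the identity $(x_i+x_j)^3 = c^2(x_i+x_j)g + (x_i+x_j+c\sum_k x_k)\, f_{ij}$ used in the $p=2$ case. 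This is the second nontrivial step, and I expect it to follow essentially mechanically once the correct $f_{ij}$ are in hand, by writing $(x_i-x_j)^{p^2-1} = \bigl((x_i-x_j)^p\bigr)^{p-1}\cdot (x_i-x_j)^{p-1}$ and iterating identities that reduce $(x_i-x_j)^p$ modulo $I$ to a multiple of lower-degree elements in $I$.

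Once $I \subseteq J_c$ is established, I argue as in the $p=2$ case: since every $x_i^{p^2}$ will lie in $I$ (by symmetry and the relation $x_1^{p^2} + x_i^{p^2} = (x_1 - x_i)\cdot (x_1-x_i)^{p^2-1} + $ adjustments), $A/I$ is finite-dimensional, so the $n$ generators form a regular sequence and $I$ is a complete intersection with the stated Hilbert series. An analog of \cite[Proposition 3.3]{chen} should force $h_{A/J_c}(t)$ to have a factor of $(1+t+\cdots+t^{p-1})^n$ or similar, and a $c\to 0$ limit argument parallel to the final paragraph of the $p=2$ proof (taking a hypothetical $n$th independent degree-$p$ singular polynomial $\phi = \sum_i \alpha_i(c) x_i^p + c(\cdots)$ with $\alpha_1(0)=1$, computing $D_1\phi$, and extracting the coefficient of $x_1$) would show $J$ admits only $n-1$ independent degree-$p$ singular polynomials, forcing $I = J_c$. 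The main conceptual gap is thus the explicit form of $f_{ij}$ for arbitrary $p$: the $p=2$ formulas collapse several distinct contributions (notably $x_i^p + x_j^p = (x_i+x_j)^p$ and $x_i - x_j = x_i + x_j$), and I expect a genuinely new combinatorial input will be required to write down $f_{ij}$ in closed form for $p>2$.
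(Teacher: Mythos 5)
This statement is a \emph{conjecture} in the paper: the author proves only the $p=2$ case (the theorem immediately preceding it) and then offers partial progress for general $p$, explicitly reducing the conjecture to an unproven existence statement. So there is no proof in the paper to match yours against, and your proposal --- which candidly flags its own ``main conceptual gap'' --- is likewise not a proof. That said, the concrete place where your plan goes wrong is the ansatz for the degree-$p$ generators. You propose $f_{ij} = (x_i-x_j)^p + c\,q_{ij}$ with a single correction term linear in $c$, modeled on the $p=2$ formula. The paper's own partial computations show this is already false for $p=3$: writing a degree-$p$ singular vector as $F = F_0 + cF_1 + c^2F_2 + \cdots$ with $F_0 = \sum_i a_i x_i^p$ and $\sum_i a_i = 0$, the recursion $\partial_k F_m = B_k F_{m-1}$ forces nonzero $F_2$ (and, for general $p$, one expects a genuine power series, a priori even a Laurent/rational-function coefficient field in $c$). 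The $p=2$ case is deceptively short only because the recursion happens to terminate at $F_1$ there. The existence of solutions $F_m$ at every order --- equivalently, the integrability conditions $\partial_i B_k F_{m-1} = \partial_k B_i F_{m-1}$ holding at each step --- is exactly the content of the paper's Conjecture on generators, and your proposal does not engage with it; ``undetermined coefficients organized by $S_n$-weight'' is a reasonable way to compute low-order terms (the paper does exactly this for $F_1$ and $F_2$) but does not by itself close the induction.

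The rest of your outline is structurally sound and does parallel what a full proof would presumably look like: the degree-$p^2$ generator $x_1^{p^2}$ with $D_k(x_1^{p^2}) = c(x_1-x_k)^{p^2-1}$ via the double Frobenius is the right candidate, and the endgame (finite-dimensionality of $A/I$, regular sequence from $n$ generators in Krull dimension $n$, an analog of \cite[Proposition 3.3]{chen}, and the $c\to 0$ limit argument bounding the number of independent degree-$p$ singular vectors by $n-1$) transplants plausibly from the $p=2$ proof. But two of these steps are themselves only hoped for: you would need to actually show $(x_i-x_j)^{p^2-1}$ lies in the ideal generated by the degree-$p$ singular vectors (the $p=2$ identity you cite has no obvious closed-form generalization once the $f_{ij}$ are honest power series in $c$), and you would need to verify that the cited factorization result for $h_{A/J}(t)$ applies for general $p$. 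In short: the skeleton is the right one, but every load-bearing joint is exactly where the paper itself stops, so this remains a plan for a proof of an open conjecture rather than a proof.
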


We may write the generators of degree $p$ as $F=F_{0}+cF_{1}+c^{2}F_{2}+\cdots$, where the $F_{i}$ are degree $p$ polynomials individually killed by the Dunkl operators $D_{k}=\partial_{k}-cB_{k}$, where $\partial_{k}$ denotes partial differentiation, and we take $F$ to be in the polynomial ring of the $x_{i}$ with the coefficient field being Laurent series in $c$. Applying Dunkl operators to $F$ and setting the result equal to 0, we get:
\begin{align*}
\partial_{k}F_{0}&=0\\
\partial_{k}F_{m}&=B_{k}F_{m-1}
\end{align*}

The first relation gives us that $F_{0}$ must be of the form $F_{0}=\displaystyle\sum_{i=1}^{n}a_{i}x_{i}^{p}$, where $a_{1},a_{2},\ldots,a_{n}\in\bF_{p}$.

\begin{rmk}
Note that given $F_{m-1}$, there exists $F_{m}$ such that $\partial_{k}F_{m}=B_{k}F_{m-1}$ for $k=1,2,\ldots,n$ if and only if $\partial_{i}B_{k}F_{m-1}=\partial_{k}B_{i}F_{m-1}$ for all $i=1,2,\ldots,n$ (to ensure equality of mixed partial derivatives). 
\end{rmk}

\begin{lemma}
$J_{c}$ contains no generators of degree less than $p$.
\end{lemma}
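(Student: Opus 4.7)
The plan is to exploit the $c$-expansion framework developed immediately above the lemma. Suppose for contradiction that $J_c$ contains a generator of degree less than $p$; equivalently, there is a nonzero homogeneous element of $J_c$ of some degree $d$ with $1 \le d < p$ (the case $d = 0$ is excluded because $J_c$ contains no constants). Let $P \in J_c$ be a nonzero homogeneous element of minimal such degree. By the remark following Theorem~2.6, $P$ must be singular, i.e., $D_k P = 0$ for all $k$.

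I would first normalize $P$ to have polynomial coefficients in $c$ with nonzero constant term. Since the coefficients of $P$ lie in $\bF_p(c)$, we may clear denominators in $c$ and then divide out by the largest power of $c$ dividing the result; because $\bF_p[c]$ is a principal ideal domain, this yields an expression
\[
P = \sum_{m \ge 0} c^m P_m,
\]
where each $P_m \in \bF_p[x_1, \ldots, x_n]$ is homogeneous of degree $d$ and $P_0 \ne 0$. Writing $D_k = \partial_k - cB_k$ as in the paragraph preceding the lemma and extracting the coefficient of $c^0$ from $D_k P = 0$, one obtains
\[
\partial_k P_0 = 0 \quad \text{for all } k.
\]

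The key step is then to observe that in characteristic $p$, a nonzero homogeneous polynomial $P_0 \in \bF_p[x_1, \ldots, x_n]$ of degree $d$ with $1 \le d \le p-1$ cannot be annihilated by every $\partial_k$: each monomial $x^\alpha$ in $P_0$ satisfies $\alpha_k \le d < p$, so $\alpha_k \not\equiv 0 \pmod p$ whenever $\alpha_k > 0$; the relations $\partial_k P_0 = 0$ therefore force every exponent $\alpha_k$ to vanish, which would make $P_0$ a constant, and hence zero since $d \ge 1$. This contradicts $P_0 \ne 0$ and completes the argument.

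There is no substantive obstacle here: the lemma reduces to the observation that polynomials of positive degree below $p$ cannot have all partials zero in characteristic $p$, which is precisely the constraint coming from the leading term of the $c$-expansion of the singularity condition. The only technical care needed is the normalization step that produces $P_0 \ne 0$, which is routine.
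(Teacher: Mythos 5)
Your proof is correct and follows essentially the same route as the paper: expand a minimal-degree (hence singular) element in powers of $c$, extract the $c^0$ coefficient of $D_kP=0$ to get $\partial_k P_0=0$, and note that a nonzero homogeneous polynomial of degree $d$ with $1\le d<p$ cannot have all partial derivatives vanish in characteristic $p$. You simply spell out the normalization of the $c$-expansion and the final exponent-divisibility argument, which the paper leaves implicit.
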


\begin{proof}
Consider the lowest-degree generators, which must be killed by Dunkl operators. We may still write a generator $F$ in the form $F=F_{0}+cF_{1}+\cdots$ satisfying the same relations as above; in particular, $\partial_{k}F_{0}=0$ for all $k$. However, this is impossible unless $\text{deg}(F)$ is a multiple of $p$.
\end{proof}

\begin{lemma}
$\displaystyle\sum_{i=1}^{n}a_{i}=0$.
\end{lemma}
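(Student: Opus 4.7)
The plan is to extract information from the first non-trivial compatibility relation $\partial_k F_1 = B_k F_0$ noted in the remark preceding the lemma. The key observation, specific to characteristic $p$, is that if $F_1$ is homogeneous of degree $p$ in the $x_i$'s, then $\partial_k F_1$ cannot have $x_k$-degree equal to $p-1$: the only monomial in $F_1$ that could contribute a $x_k^{p-1}$ term under $\partial_k$ is a scalar multiple of $x_k^p$, whose derivative is $p \cdot (\text{scalar}) \cdot x_k^{p-1} = 0$. Consequently, the coefficient of $x_k^{p-1}$ in $B_k F_0$ must vanish.

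To exploit this, I will compute $B_k F_0$ directly. Because $\tau$ is trivial, each transposition $s_{kj}$ acts by swapping $x_k$ and $x_j$, so $B_k(x_i^p)$ vanishes unless $i \in \{k,j\}$. A short calculation using the Freshman's dream $(x_k^p - x_j^p)/(x_k - x_j) = (x_k - x_j)^{p-1}$ gives
\begin{align*}
B_k F_0 = \sum_{j\neq k} (a_k - a_j)(x_k - x_j)^{p-1}.
\end{align*}
The coefficient of $x_k^{p-1}$ in this expression is $\sum_{j \neq k}(a_k - a_j) = n a_k - \sum_j a_j$, which reduces to $-\sum_j a_j$ modulo $p$ since $p \mid n$. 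Forcing this to be zero yields $\sum_j a_j = 0$, as required.

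There is no real obstacle here. The one subtlety is choosing what to compute: rather than trying to verify the full equality-of-mixed-partials integrability condition $\partial_i(B_k F_0) = \partial_k(B_i F_0)$ (which turns out to hold automatically, after noting that $(-1)^{p-2} = -1$ for odd $p$), one must instead use the more restrictive requirement that $B_k F_0$ actually lie in the image of $\partial_k$ acting on degree-$p$ homogeneous polynomials in characteristic $p$. The hypothesis $p \mid n$ enters the argument precisely at the final step, where it kills the $n a_k$ contribution and isolates the sum $\sum_j a_j$.
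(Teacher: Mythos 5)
Your argument is correct and is essentially the paper's own proof: both extract the coefficient of $x_k^{p-1}$ from $B_kF_0=\sum_{j\neq k}(a_k-a_j)\frac{x_k^p-x_j^p}{x_k-x_j}$, observe that $\partial_kF_1$ cannot contain such a term in characteristic $p$, and use $p\mid n$ to reduce $\sum_{j\neq k}(a_k-a_j)$ to $-\sum_j a_j$. The only cosmetic difference is that you write the quotient as $(x_k-x_j)^{p-1}$ via the Frobenius identity while the paper expands it as $\sum_{r=0}^{p-1}x_k^r x_j^{p-1-r}$; both give coefficient $1$ on $x_k^{p-1}$.
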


\begin{proof}
We have
\begin{align*}
B_{1}F_{0}&=\displaystyle\sum_{j=2}^{n}\dfrac{1}{x_{1}-x_{j}}(a_{1}x_{1}^{p}+a_{j}x_{j}^{p}-a_{1}x_{j}^{p}-a_{j}x_{1}^{p})\\
&=\displaystyle\sum_{j=2}^{n}\dfrac{(a_{1}-a_{j})(x_{1}^{p}-x_{j}^{p})}{x_{1}-x_{j}}\\
&=\displaystyle\sum_{j=2}^{n}\displaystyle\sum_{r=0}^{p-1}(a_{1}-a_{j})x_{1}^{r}x_{j}^{p-1-r}=\partial_{1}F_{1}.
\end{align*}
However, note that in characteristic $p$, $\partial_{1}F_{1}$ cannot have terms of the form $ax_{1}^{p-1}$. Thus,
\[
\sum_{j=2}^{n}(a_{1}-a_{j})=-\displaystyle\sum_{j=1}^{n}a_{j}=0\Rightarrow
\sum_{j=i}^{n}a_{i}=0. \qedhere
\]
\end{proof}

\begin{conjecture} \label{conjecture:gens}
Given $F_{0}=\displaystyle\sum_{i=1}^{n}a_{i}x_{i}^{p}$ with $\displaystyle\sum_{j=i}^{n}a_{i}=0$, there exist $F_{1},F_{2},\ldots$ such that $\partial_{k}F_{m}=B_{k}F_{m-1}$ for all positive integers $m$. Furthermore, at each step in the recursion, each $F_{m}$ is unique up to adding $p$-th powers, and the set of all possible $F=\displaystyle\sum_{i=0}^{\infty}c^{i}F_{i}$ forms an $\bF_{p}$-vector space of dimension $p-1$.
\end{conjecture}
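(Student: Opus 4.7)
The strategy is induction on $m$, establishing at each step both existence of $F_m$ solving $\partial_k F_m = B_k F_{m-1}$ and uniqueness up to adding an element of $\mathbf{F}_p\{x_1^p,\ldots,x_n^p\}$. Given $F_0,\ldots,F_{m-1}$, the overdetermined system $\partial_k F_m = B_k F_{m-1}$ is solvable if and only if two conditions hold: (i) the compatibility $\partial_i(B_k F_{m-1}) = \partial_k(B_i F_{m-1})$ for all $i,k$, needed already in characteristic zero; and (ii) the characteristic-$p$ obstruction that the coefficient of $x_k^{p-1}$ in $B_k F_{m-1}$ vanishes for each $k$, reflecting the fact that $\partial_k x_k^p = 0$. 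Uniqueness is then automatic: any two solutions differ by an element of $\bigcap_k \ker\partial_k = \mathbf{F}_p[x_1^p,\ldots,x_n^p]$, which in degree $p$ is the span of $\{x_1^p,\ldots,x_n^p\}$.

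Condition (i) I would derive from the commutativity of the Dunkl operators $D_i D_j = D_j D_i$. Expanding with $D_i = \partial_i - cB_i$ and comparing coefficients of $c$ and $c^2$ produces
\[
[\partial_i, B_j] = [\partial_j, B_i], \qquad B_i B_j = B_j B_i,
\]
and combining these with the inductive hypothesis $\partial_j F_{m-1} = B_j F_{m-2}$ yields
\[
\partial_i(B_k F_{m-1}) = B_k \partial_i F_{m-1} + [\partial_i, B_k] F_{m-1} = B_k B_i F_{m-2} + [\partial_i, B_k] F_{m-1},
\]
which is symmetric in $i \leftrightarrow k$ and hence equals $\partial_k(B_i F_{m-1})$. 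Condition (ii) is the principal obstacle. Its base case $m=1$ is exactly the content of the preceding Lemma, where the vanishing of the $x_1^{p-1}$ coefficient in $B_1 F_0$ forced $\sum a_i = 0$. To extend it, I would isolate the $\mathbf{F}_p$-linear functional $\phi_k$ sending a degree-$p$ polynomial $F$ to the $x_k^{p-1}$-coefficient of $B_k F$, and try to prove inductively that $\phi_k(F_{m-1}) = 0$ by re-expressing it in terms of $\phi_k(F_{m-2})$ using the recursion. A potentially cleaner alternative is to lift the entire construction to characteristic zero over $\mathbf{Z}_{(p)}[c]$, where only condition (i) is needed, and then show that a canonically normalized $F_m$ (e.g., with no $x_k^p$ terms) has denominators coprime to $p$, so that reducing modulo $p$ yields the required element.

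Finally, for the dimension count, the space of valid $F_0$'s is the hyperplane $\{\sum a_i = 0\} \subset \mathbf{F}_p^n$, and the ambiguity at each subsequent step — a $p$-th power — must be quotiented by the choices which produce no genuinely new $F$; a careful bookkeeping of this equivalence should give the stated dimension. The hardest step by far is condition (ii): unlike (i), it has no abstract origin in the Cherednik-algebra relations, and verifying it at every step of the recursion appears to require either a delicate invariant preserved by the map $F_{m-1} \mapsto F_m$ or explicit $p$-adic denominator control in a characteristic-zero lift.
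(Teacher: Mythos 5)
Be aware that the statement you were asked to prove is labelled a \emph{conjecture} in the paper: the paper itself does not prove it, and only establishes fragments (uniqueness of $F_m$ up to $p$-th powers, a conditional dimension count, and explicit formulas for $F_1$, $F_2$ and for the case $p=3$). Your outline correctly reproduces exactly those fragments. Your uniqueness argument ($F_m - F_m' \in \bigcap_k \ker \partial_k$, which in degree $p$ is spanned by $x_1^p,\ldots,x_n^p$) is word-for-word the paper's proof of its uniqueness proposition. Your derivation of the compatibility condition $\partial_i(B_kF_{m-1}) = \partial_k(B_iF_{m-1})$ from $D_iD_j = D_jD_i$ by comparing coefficients of $c$ and $c^2$ is a genuine improvement over the paper, which merely records the mixed-partials condition in a remark without justifying that it holds. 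And your identification of the characteristic-$p$ obstruction --- the vanishing of the $x_k^{p-1}$-coefficient of $B_kF_{m-1}$, whose $m=1$ case is the lemma forcing $\sum_i a_i = 0$ --- is precisely the right formulation of what remains open (by the Cartier description of $H^1$ of the de Rham complex in characteristic $p$, your conditions (i) and (ii) are indeed sufficient for solvability in this degree).

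The genuine gap is the one you name yourself: condition (ii) is not verified for any $m \ge 2$, and without it there is no proof of existence of the $F_m$, which is the heart of the conjecture. Neither of your two suggested routes (an invariant preserved by $F_{m-1} \mapsto F_m$, or denominator control on a characteristic-zero lift over $\mathbf{Z}_{(p)}[c]$) is carried out, and the paper does not carry them out either; the second is morally what the paper does for $S_3$ in Section 3 via binomial-coefficient generating functions, but no such construction is given for general $n$ with $p \mid n$. So your proposal is an honest and well-structured reduction of the conjecture to its open core, not a proof. One further point to flag: your dimension count via the hyperplane $\{\sum_i a_i = 0\} \subset \mathbf{F}_p^n$ yields dimension $n-1$, which agrees with the paper's own proposition and with the conjectured generator count for $J_c$, whereas the conjecture as printed says $p-1$; this is almost certainly a typo in the paper, but you should not paper over the discrepancy by saying your bookkeeping ``should give the stated dimension.''
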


We now prove a few parts of this conjecture.

\begin{proposition}
If, given $F_{m-1}$, there exists $F_{m}$ for which $\partial_{k}F_{m}=B_{k}F_{m-1}$, then $F_{m}$ is unique up to adding $p$-th powers.
\end{proposition}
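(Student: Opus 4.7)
The plan is elementary and rests on the linearity of the equation $\partial_k F_m = B_k F_{m-1}$ in the unknown $F_m$. If $F_m$ and $F_m'$ are two polynomials both satisfying this relation for every $k = 1, \ldots, n$, then their difference $G := F_m - F_m'$ is annihilated by every partial derivative $\partial_k$. The problem thus reduces to characterizing polynomials $G$ in $n$ variables whose partial derivatives all vanish in characteristic $p$.

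First I would unpack $\partial_k G = 0$ monomial by monomial: writing $G = \sum_\alpha g_\alpha x^\alpha$, we have $\partial_k G = \sum_\alpha \alpha_k g_\alpha x^{\alpha - e_k}$, which vanishes iff every exponent $\alpha_k$ appearing in a nonzero term is divisible by $p$. Applying this observation for each $k$ shows that $G$ lies in the subring generated by $p$-th powers of the variables, i.e., $G \in K[x_1^p, \ldots, x_n^p]$, where $K$ is the coefficient field.

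The second step invokes the additivity of the Frobenius endomorphism in characteristic $p$. Since in the setup of Conjecture~\ref{conjecture:gens} the $F_i$ have coefficients in $\bF_p$ (as already reflected in the form $F_0 = \sum a_i x_i^p$ with $a_i \in \bF_p$), and every element of $\bF_p$ is its own $p$-th power, writing $G = \sum_\beta g_\beta (x^\beta)^p$ with $g_\beta \in \bF_p$ gives
\[
G = \sum_\beta g_\beta^p (x^\beta)^p = \Bigl(\sum_\beta g_\beta x^\beta\Bigr)^p,
\]
exhibiting $G$ explicitly as the $p$-th power of a polynomial in the $x_i$, as required.

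I do not anticipate a substantial obstacle: the argument is purely formal once one observes that the coefficient field is perfect. The only point worth flagging is that the intended notion of ``$p$-th power'' should be read as matching the kernel description $K[x_1^p,\ldots,x_n^p]$, and over $\bF_p$ these coincide by Frobenius additivity; working over a non-perfect field (e.g.\ $\bF_p(c)$) would instead produce uniqueness only up to elements of $K[x_1^p,\ldots,x_n^p]$, which is not a concern here.
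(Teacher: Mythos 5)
Your proposal is correct and takes essentially the same approach as the paper: the difference $F_m - F_m'$ is annihilated by every $\partial_k$, hence lies in $K[x_1^p,\dots,x_n^p]$, i.e., is a sum of $p$-th powers. The extra observation that Frobenius additivity over $\bF_p$ upgrades this to an honest $p$-th power is a harmless refinement the paper does not spell out.
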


\begin{proof}
Suppose there exists some $F'_{m}$ satisfying $\partial_{k}F'_{m}=B_{k}F_{m-1}$. Then, all partials of $F_{m}-F'_{m}$ vanish, so $F_{m}-F'_{m}$ must be the sum of $p$-th powers.
\end{proof}

\begin{proposition}
Assume that if we take each $F_{i}$ with $i>0$ to include no $p$-th powers, we can construct $F_{1},F_{2},\ldots$. Then, the space of all possible generators $F$ of degree $p$ has dimension $n-1$.
\end{proposition}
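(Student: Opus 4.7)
The plan is to construct an explicit $\mathbf{F}_p$-linear isomorphism between the space of admissible leading terms $F_0$ and the space of normalized degree-$p$ generators, then read off the dimension from the preceding Lemma. Concretely, given $F_0 = \sum_{i=1}^n a_i x_i^p$ with $a_i \in \mathbf{F}_p$ and $\sum_i a_i = 0$, I send it to $\Psi(F_0) := F_0 + cF_1 + c^2 F_2 + \cdots$, where $F_1, F_2, \ldots$ are the unique polynomials containing no $p$-th-power terms and satisfying the recursion $\partial_k F_m = B_k F_{m-1}$.

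Uniqueness of the sequence $(F_m)_{m\ge 1}$ is supplied by the previous proposition: any two solutions of $\partial_k F_m = B_k F_{m-1}$ differ by a sum of $p$-th powers, i.e.\ by an $\mathbf{F}_p$-linear combination of the monomials $x_i^p$, which the normalization forbids. Existence is exactly the hypothesis of the present proposition. So $\Psi$ is well-defined. Linearity of $\Psi$ follows because the recursive equations and the no-$p$-th-power condition are both $\mathbf{F}_p$-linear in the $F_m$'s, so a linear combination of normalized solutions is itself the normalized solution for the corresponding linear combination of leading terms.

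Injectivity of $\Psi$ is trivial, as $F_0$ is the $c$-constant part of $\Psi(F_0)$. For surjectivity, any normalized generator has admissible leading term by the Lemma, and then equals $\Psi(F_0)$ by the uniqueness just described. So $\Psi$ is an $\mathbf{F}_p$-linear isomorphism, and by the Lemma the domain has $\mathbf{F}_p$-dimension $n-1$, coming from the single linear constraint $\sum a_i = 0$ on the $n$ free parameters $a_i \in \mathbf{F}_p$; this gives the claimed dimension.

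The whole argument is a dimension count layered on top of the preceding results, and I expect no genuine obstacle. The one point worth flagging is that the normalization condition "$F_m$ has no $x_i^p$ monomial" cuts out an $\mathbf{F}_p$-linear subspace, which is needed both for $\Psi$ to be well-defined and for it to be linear; this holds tautologically, as it is a coefficient-vanishing condition.
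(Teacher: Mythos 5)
Your map $\Psi$ correctly handles the \emph{normalized} generators (those in which no $F_m$ with $m\ge 1$ contains a $p$-th power), and your linearity and injectivity arguments for $\Psi$ are fine. The gap is in surjectivity: the proposition concerns the space of \emph{all} degree-$p$ generators, and the recursion $\partial_k F_m = B_k F_{m-1}$ determines each $F_m$ only up to adding a sum of $p$-th powers. So there are legitimate generators $F=\sum_{i} c^{i}F_{i}$ in which a $p$-th-power term $\sum_{i} b_{i}x_{i}^{p}$ has been added to some $F_{k}$ with $k\ge 1$; such an $F$ is not $\Psi(F_0)$ for any $F_0$ (the added term feeds into $B_jF_k$ and changes every subsequent $F_m$), and your sentence ``any normalized generator \dots equals $\Psi(F_0)$'' only re-proves that $\Psi$ surjects onto its own image. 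Nothing in your argument shows these non-normalized generators contribute no extra dimension.

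The paper closes exactly this gap with a subtract-and-divide step: if $F$ has leading term $F_{0}=\sum_{i}a_{i}x_{i}^{p}$ and first deviates from the normalized solution at step $k$, then $F'=F-F(a_{1},\dots,a_{n})$ is again a degree-$p$ singular vector divisible by $c^{k}$, so $c^{-k}F'$ is another generator of the same shape with its own leading term, and the argument iterates ($c$-adically in the Laurent-series setting). This shows every degree-$p$ generator is a combination of the $F(a_{1},\dots,a_{n})$ with coefficients that are powers of $c$, which is what actually pins the dimension at $n-1$ over the coefficient field. You need this step, or some substitute for it, before the dimension count is complete.
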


\begin{proof}
Let $F(a_{1},a_{2},\ldots,a_{n})\in J_{c}$, where $\displaystyle\sum_{i=1}^{n}a_{i}=0$, be the generator obtained by taking $F_{0}=\displaystyle\sum_{i=1}^{n}a_{i}x_{i}^{p}$ (assuming the first part of \ref{conjecture:gens}). The space of $F_{0}$ is isomorphic to the space of $\bF_{p}$-vectors whose components sum to zero, which has dimension $n-1$. It suffices to show that the $F(a_{1},a_{2},\ldots,a_{n})$ span the space of degree $p$ generators. Suppose that we have a generator $F$ in which we add a sum of $p$-th powers $F'_{0}=\displaystyle\sum_{i=1}^{n}b_{i}x_{i}^{p}$ to $F_{k}$, and $k\ge1$ is minimal. Letting $F_{0}=\displaystyle\sum_{i=1}^{n}a_{i}x_{i}^{p}$, note that $F'=F-F(a_{1},a_{2},\ldots,a_{n})\in J_{c}$. Furthermore, $F'c^{-k}\in J_{c}$, and $F'$ has the form $\displaystyle\sum_{i=0}^{\infty}b^{i}F'_{i}$, and we may iterate the argument on $F'$. It follows that $F$ is a linear combination of the $F(a_{1},a_{2},\ldots,a_{n})$, where we may take our weights to be appropriate powers of $c$.
\end{proof}

\begin{rmk}
We have now reduced \ref{conjecture:gens} to proving the existence of the $F_{i}$, provided that we include no $p$-th powers in any of $F_{1},F_{2},\ldots$. Furthermore, if we can make this construction in the field of Laurent series, the series can be replaced by rational functions since the number and degrees of the generators of $J$ doesn't change if we enlarge the field from rational functions to Laurent series. This follows from the fact that the matrices for $\beta$ have entries in the field of rational functions in $c$, and the rank of a matrix doesn't change if the field of coefficients is enlarged.
\end{rmk}

\begin{proposition}
Take $p\neq2$. If, at the respective steps, we include no $p$-th powers, we have
\begin{center}
$F_{1}=-\displaystyle\sum_{1\le i<j\le n}\displaystyle\sum_{\substack{r,s>0\\r+s=p}}\dfrac{ra_{i}+sa_{j}}{rs}x_{i}^{r}x_{j}^{s}$
\end{center}
and
\begin{center}
$F_{2}=\displaystyle\sum_{i<j<k}\displaystyle\sum_{\substack{r,s,t>0\\r+s+t=p}}\left(\dfrac{ra_{i}+sa_{j}+ta_{k}}{rst}\right)x_{i}^{r}x_{j}^{s}x_{k}^{t}+\displaystyle\sum_{i<j}\displaystyle\sum_{\substack{r,s>0\\r+s=p}}\dfrac{a_{i}-a_{j}}{r}\left(\dfrac{1}{s}-2\displaystyle\sum_{d=1}^{s}\dfrac{1}{d}\right)x_{i}^{r}x_{j}^{s}$.
\end{center}
\end{proposition}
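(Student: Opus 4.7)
The plan is to verify the explicit formulas for $F_1$ and $F_2$ by direct computation, checking $\partial_k F_m = B_k F_{m-1}$ for $m=1,2$. Uniqueness up to $p$-th powers is already provided by the preceding proposition, and one sees by inspection that each formula involves only genuinely multivariate monomials with strictly positive exponents, so no $p$-th powers are present; thus only existence/correctness needs verification. First I would make explicit the operator $B_k$ in the trivial-representation case:
\[
B_k P = \sum_{\ell \ne k} \frac{P - s_{k\ell} P}{x_k - x_\ell},
\]
so that the recursion $\partial_k F_m = B_k F_{m-1}$ can be checked monomial by monomial.

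For $F_1$, I would first compute
\[
B_k F_0 = \sum_{\ell \ne k} (a_k - a_\ell) \sum_{u=0}^{p-1} x_k^u x_\ell^{p-1-u},
\]
note that the potentially problematic coefficient of $x_k^{p-1}$ equals $\sum_{\ell\ne k}(a_k-a_\ell) = n a_k \equiv 0 \pmod p$ by $p\mid n$ and the previous lemma, and then differentiate the proposed $F_1$. Matching the coefficient of $x_k^u x_\ell^s$ with $u+s=p-1$, $s\ge 1$, on both sides reduces to the identity $(u+1)a_k + s a_\ell \equiv -s(a_k - a_\ell) \pmod p$, which holds because $u+s+1 = p$.

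For $F_2$, I would split $B_k F_1$ according to whether the index $k$ equals one of the two indices appearing in a monomial of $F_1$ or not. The three-variable part of $F_2$ arises from summands $x_i^r x_j^s$ of $F_1$ with $k\notin\{i,j\}$: here the two divided differences $(x_i^r x_j^s - x_k^r x_j^s)/(x_k-x_i)$ and $(x_i^r x_j^s - x_i^r x_k^s)/(x_k-x_j)$ expand as three-variable monomials, and combining the two contributions with the appropriate labels $i<j<k$ (with the required reshuffling when $k$ is not the largest index) should produce exactly $\partial_k$ of the first summand in the formula, using the symmetric identity $(r a_i + s a_j + t a_k)/(rst)$. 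The two-variable part of $F_2$ arises from the summands where $k\in\{i,j\}$, where one has to add the "off-diagonal" contributions $\sum_{\ell\ne k,j}(x_k^r x_j^s - x_\ell^r x_j^s)/(x_k-x_\ell)$ (whose $x_\ell$-free pieces survive after expansion) to the "exchange" contribution $(x_k^r x_j^s - x_j^r x_k^s)/(x_k-x_j)$. Once these are collected and matched against $\partial_k$ applied to the two-variable part of $F_2$, the harmonic sum $\sum_{d=1}^s 1/d$ should emerge from telescoping identities of the form $\sum_{d=1}^{s}\tfrac{1}{d} - \sum_{d=1}^{s-1}\tfrac{1}{d} = \tfrac{1}{s}$ together with partial-fraction manipulations of $\sum_r 1/(r(r+s))$ type.

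The main obstacle will be the two-variable part of the $F_2$ verification: the harmonic sum $\sum_{d=1}^s 1/d$ is not visible in $B_k F_1$ as written, and appears only after carefully disentangling the exchange term and summing over $\ell$, while simultaneously tracking the delicate cancellations of coefficients of shape $x_k^{p-1}$ or $x_k^u x_j^{p-1-u}$ that require $p\mid n$ and $\sum a_i = 0$ to vanish mod $p$. Once this bookkeeping is done, the rest of the argument is linear algebra: existence together with the previous uniqueness proposition forces the $F_m$ to coincide with the stated formulas modulo $p$-th powers.
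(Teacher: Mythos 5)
Your overall strategy is the same as the paper's: verify the recursion $\partial_k F_m = B_k F_{m-1}$ directly for $m=1,2$, with uniqueness supplied by the preceding proposition. The $F_1$ half of your argument is complete and correct: the vanishing of the $x_k^{p-1}$ coefficient of $B_kF_0$ via $p\mid n$ and $\sum_i a_i=0$, and the coefficient identity $(u+1)a_k+sa_\ell\equiv -s(a_k-a_\ell)$ from $u+s+1=p$, are exactly the points that matter, and they match the paper's computation.

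The $F_2$ half, however, is only a plan, and the one place where all the work (and the hypothesis $p\neq 2$) lives is left as ``should emerge.'' Concretely: the exchange term $\sum_{r+s=p}\frac{ra_k+sa_j}{rs}\cdot\frac{x_k^rx_j^s-x_k^sx_j^r}{x_k-x_j}$ contributes $-2\frac{a_k-a_j}{s}$ to the coefficient of $x_k^bx_j^d$ (with $b+d=p-1$) for each $s=1,\dots,\min(b,d)$, so the raw coefficient one obtains is $(a_k-a_j)\bigl(\tfrac{1}{d}-2\sum_{i=1}^{\min(b,d)}\tfrac{1}{i}\bigr)$, with the harmonic sum truncated at $\min(b,d)$ rather than at $d$. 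To match the stated formula you need $\sum_{i=1}^{b}\tfrac1i=\sum_{i=1}^{d}\tfrac1i$ in $\mathbf{F}_p$ whenever $b+d=p-1$, which follows from $\sum_{i=1}^{p-1}\tfrac1i=0$ and holds only for odd $p$; this is precisely where $p\neq2$ enters, and your sketch never locates that use. (The paper makes this observation explicitly.) Also, your bookkeeping assigns the three-variable part of $F_2$ to the case $k\notin\{i,j\}$, but the case $k\in\{i,j\}$, $\ell\notin\{i,j\}$ also produces three-variable monomials whose contributions must be combined with the former before the symmetric coefficient $(ra_i+sa_j+ta_k)/(rst)$ appears; you gesture at this (``off-diagonal contributions'') but do not carry it out. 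Until these two computations are actually done, the $F_2$ statement is asserted rather than proved.
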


\begin{proof}
We already have
\begin{center}
$\partial_{1}F_{1}=\displaystyle\sum_{r=0}^{p-1}(a_{1}-a_{j})x_{1}^{r}x_{j}^{p-1-r}$,
\end{center}
and similar relations hold for the other partial derivatives. We find that the system of differential equations is indeed satisfied by
\begin{center}
$F_{1}=-\displaystyle\sum_{1\le i<j\le n}\displaystyle\sum_{\substack{r,s>0\\r+s=p}}\dfrac{ra_{i}+sa_{j}}{rs}x_{i}^{r}x_{j}^{s}$,
\end{center}
and it is clear that $F_{1}$ must be unique (since it is homogeneous). Continuing,
\begin{align*}
B_{1}F_{1}&=\displaystyle\sum_{j=2}^{n}\dfrac{1}{x_{1}-x_{j}}\left(-\displaystyle\sum_{1\le i<k\le n}\displaystyle\sum_{\substack{r,s>0\\r+s=p}}(1-s_{ij})\dfrac{ra_{i}+sa_{k}}{rs}x_{i}^{r}x_{k}^{s}\right)\\
&=-\displaystyle\sum_{j=2}^{n}\dfrac{1}{x_{1}-x_{j}}(1-s_{ij})\left(\displaystyle\sum_{k\neq1,j}\displaystyle\sum_{\substack{r,s>0\\r+s=p}}\dfrac{ra_{1}+sa_{k}}{rs}x_{1}^{r}x_{k}^{s}+\displaystyle\sum_{k\neq1,j}\displaystyle\sum_{\substack{r,s>0\\r+s=p}}\dfrac{ra_{j}+sa_{k}}{rs}x_{j}^{r}x_{k}^{s}\right)\\
&-\displaystyle\sum_{j=2}^{n}\dfrac{1}{x_{1}-x_{j}}(1-s_{ij})\left(\displaystyle\sum_{\substack{r,s>0\\r+s=p}}\dfrac{ra_{1}+sa_{j}}{rs}x_{1}^{r}x_{j}^{s}\right).
\end{align*}

Consider the first term. We have
\begin{align*}
&-\displaystyle\sum_{j=2}^{n}\dfrac{1}{x_{1}-x_{j}}(1-s_{ij})\left(\displaystyle\sum_{k\neq1,j}\displaystyle\sum_{\substack{r,s>0\\r+s=p}}\dfrac{ra_{1}+sa_{k}}{rs}x_{1}^{r}x_{k}^{s}+\displaystyle\sum_{k\neq1,j}\displaystyle\sum_{\substack{r,s>0\\r+s=p}}\dfrac{ra_{j}+sa_{k}}{rs}x_{j}^{r}x_{k}^{s}\right)\\
&=\displaystyle\sum_{j=2}^{n}\left(-\displaystyle\sum_{k\neq1,j}\displaystyle\sum_{\substack{r,s>0\\r+s=p}}\dfrac{ra_{1}+sa_{k}}{rs}x_{k}^{s}\cdot\dfrac{x_{1}^{r}-x_{j}^{r}}{x_{1}-x_{j}}+\displaystyle\sum_{k\neq1,j}\displaystyle\sum_{\substack{r,s>0\\r+s=p}}\dfrac{ra_{j}+sa_{k}}{rs}x_{k}^{s}\cdot\dfrac{x_{1}^{r}-x_{j}^{r}}{x_{1}-x_{j}}\right)\\
&=\displaystyle\sum_{j\neq1}\left(-\displaystyle\sum_{k\neq1,j}\displaystyle\sum_{\substack{s>0\\a+b+s=p-1}}\dfrac{(a+b+1)a_{1}+sa_{k}}{(a+b+1)s}x_{k}^{s}x_{1}^{a}x_{j}^{b}+\displaystyle\sum_{k\neq1,j}\displaystyle\sum_{\substack{s>0\\a+b+s=p-1}}\dfrac{(a+b+1)a_{j}+sa_{k}}{(a+b+1)s}x_{k}^{s}x_{1}^{a}x_{j}^{b}\right)\\
&=-\displaystyle\sum_{j\neq1}\displaystyle\sum_{k\neq1,j}\displaystyle\sum_{\substack{s>0\\a+b+s=p-1}}\dfrac{a_{1}-a_{j}}{s}x_{k}^{s}x_{1}^{a}x_{j}^{b}\\
&=-\displaystyle\sum_{1<j<k}\displaystyle\sum_{\substack{s,t>0\\r+s+t=p-1}}\left(\dfrac{a_{1}-a_{j}}{t}+\dfrac{a_{1}-a_{k}}{s}\right)x_{1}^{r}x_{j}^{s}x_{k}^{t}-\displaystyle\sum_{j\neq1}\displaystyle\sum_{k\neq1,j}\displaystyle\sum_{\substack{t>0\\r+t=p-1}}\dfrac{a_{1}-a_{j}}{t}x_{1}^{r}x_{k}^{t}\\
&=\displaystyle\sum_{1<j<k}\displaystyle\sum_{\substack{s,t>0\\r+s+t=p-1}}\left(\dfrac{(r+1)a_{1}+sa_{j}+ta_{k}}{st}\right)x_{1}^{r}x_{j}^{s}x_{k}^{t}+\displaystyle\sum_{k\neq1}\displaystyle\sum_{\substack{t>0\\r+t=p-1}}\dfrac{a_{1}-a_{k}}{t}x_{1}^{r}x_{k}^{t}.
\end{align*}

Now, for the other term (in the last step we re-index from $j$ to $k$ for convenience),
\begin{center}
$-\displaystyle\sum_{j=2}^{n}\dfrac{1}{x_{1}-x_{j}}(1-s_{ij})\left(\displaystyle\sum_{\substack{r,s>0\\r+s=p}}x_{1}^{r}x_{j}^{s}\right)=-2\displaystyle\sum_{k=2}^{n}\displaystyle\sum_{\substack{r>s>0\\r+s=p}}\dfrac{ra_{1}+sa_{k}}{rs}\cdot\dfrac{x_{1}^{r}x_{k}^{s}-x_{1}^{s}x_{k}^{r}}{x_{1}-x_{k}}$.
\end{center}

We calculate the coefficient on the term $x_{1}^{b}x_{k}^{d}$ where $b+d=p-1$, upon combining these two parts. The first part gives a coefficient of $(a_{1}-a_{k})\cdot\dfrac{1}{d}$. For the second part, we get a contribution of $-2\cdot\dfrac{ra_{1}+sa_{k}}{rs}=-2\cdot\dfrac{a_{1}-a_{k}}{s}$ if and only if $b,d\ge s$, which happens for $s=1,2,\ldots,\min(b,d)$. Our coefficient is thus
\begin{center}
$(a_{1}-a_{k})\left(\dfrac{1}{d}-2\left(\dfrac{1}{1}+\dfrac{1}{2}+\cdots+\dfrac{1}{\min(b,d)}\right)\right)$.
\end{center}

When $p$ is odd, we see that in fact both values of $\min(b,d)$ give the same value since $d=p-(b+1)$. Thus (after re-indexing again), we have
\begin{center}
$B_{k}F_{1}=\displaystyle\sum_{1<j<k}\displaystyle\sum_{\substack{s,t>0\\r+s+t=p-1}}\left(\dfrac{(r+1)a_{1}+sa_{j}+ta_{k}}{st})x_{1}^{r}x_{j}^{s}x_{k}^{t}+\displaystyle\sum_{j\neq1}\displaystyle\sum_{\substack{s>0\\r+s=p-1}}(a_{1}-a_{j})\right)\left(\dfrac{1}{s}-2\displaystyle\sum_{i=1}^{s}\dfrac{1}{i}\right)x_{1}^{r}x_{j}^{s}$,
\end{center}
so taking a $x_{1}$-antiderivative and a (modified, as before) symmetric sum gives
\begin{center}
$F_{2}=\displaystyle\sum_{i<j<k}\displaystyle\sum_{\substack{r,s,t>0\\r+s+t=p}}\left(\dfrac{ra_{i}+sa_{j}+ta_{k}}{rst}\right)x_{i}^{r}x_{j}^{s}x_{k}^{t}+\displaystyle\sum_{i<j}\displaystyle\sum_{\substack{r,s>0\\r+s=p}}\dfrac{a_{i}-a_{j}}{r}\left(\dfrac{1}{s}-2\displaystyle\sum_{d=1}^{s}\dfrac{1}{d}\right)x_{i}^{r}x_{j}^{s}$,
\end{center}
as desired. 
\end{proof}

\begin{proposition}
When $p=3$, we can take
\begin{center}
$F=\displaystyle\sum_{i}a_{i}x_{i}^{3}-c\displaystyle\sum_{i,j}(a_{i}-a_{j})x_{i}^{2}x_{j}+c^{2}\left(\displaystyle\sum_{i<j<k}(a_{i}+a_{j}+a_{k})x_{i}x_{j}x_{k}+\displaystyle\sum_{i,j}(a_{i}-a_{j})x_{i}^{2}x_{j}-\displaystyle\sum_{i}a_{i}x_{i}^{3}\right)$.
\end{center}
\end{proposition}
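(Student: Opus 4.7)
The plan is to obtain $F$ by specializing the general formulas for $F_1$ and $F_2$ from the preceding proposition to $p=3$, and then to verify the new ingredient needed in the $p=3$ case: that $B_kF_2=0$ for all $k$, so that the recursion $\partial_k F_m = B_k F_{m-1}$ terminates and we may legitimately take $F = F_0 + cF_1 + c^2 F_2$.

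First, I would specialize. For $p=3$, the only solutions of $r+s=p$ with $r,s>0$ are $(1,2)$ and $(2,1)$, and the only solution of $r+s+t=p$ with $r,s,t>0$ is $(1,1,1)$. Reducing coefficients modulo $3$ (using $1/2 \equiv -1$), the formula for $F_1$ collapses to $-\sum_{i,j}(a_i-a_j)x_i^2 x_j$ once the $(1,2)$ and $(2,1)$ contributions are repackaged as an ordered sum. Similarly, the ternary part of $F_2$ reduces to $\sum_{i<j<k}(a_i+a_j+a_k)x_ix_jx_k$, and the binary part of $F_2$ reduces, after the same ordered-sum repackaging, to $\sum_{i,j}(a_i-a_j)x_i^2 x_j$. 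The extra summand $-\sum_i a_i x_i^3$ in the stated $F$ is a sum of $p$-th powers, which by the uniqueness proposition earlier in this section is precisely the freedom we have in choosing $F_2$; we are simply selecting one particular representative.

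Second, and this is the key point specific to $p=3$, I would verify directly that $B_k F_2 = 0$ for every $k$, with $F_2$ as adjusted above. Without this, the recursion would force a nonzero $F_3$, and $F$ would not terminate at $c^2$. The verification is a divided-difference calculation: for each $k$ one expands $B_k F_2 = \sum_{j\ne k} \frac{1}{x_k - x_j}(1 - s_{kj})F_2$ on each of the three pieces of $F_2$ (the cubic $-\sum_i a_i x_i^3$, the quadratic-times-linear sum, and the ternary sum) and checks cancellation after using $\sum_i a_i = 0$. The role of the $-\sum_i a_i x_i^3$ correction is precisely to produce the cancellation: it does not alter $\partial_k F_2$ (since $\partial_k x_i^3 = 0$ in characteristic $3$), but its image under $B_k$ kills the leftover terms from the other two pieces.

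Once $B_k F_2 = 0$ is established, the conclusion is immediate. The coefficient of $c^m$ in $D_k F = (\partial_k - cB_k)(F_0+cF_1+c^2 F_2)$ is $\partial_k F_m - B_k F_{m-1}$ for $m=0,1,2$, which vanish by the identities from the preceding proposition specialized to $p=3$, and it is $-B_k F_2$ for $m=3$, which vanishes by the verification above. Hence $D_k F = 0$ for every $k$ and $F$ is singular, as claimed. The main obstacle is the bookkeeping in the $B_k F_2 = 0$ verification: the three pieces of $F_2$ have different combinatorial shapes, and the cancellation only becomes transparent after consolidating divided differences $\frac{x_i^r x_j^s - x_j^r x_i^s}{x_i-x_j}$ into symmetric polynomials and invoking $\sum_i a_i = 0$ in characteristic $3$.
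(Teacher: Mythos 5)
Your proposal follows essentially the same route as the paper: specialize the general formulas for $F_1$ and $F_2$ to $p=3$ (your mod-$3$ reductions of the coefficients are correct), observe that the extra $-\sum_i a_i x_i^3$ is an allowed $p$-th-power adjustment that leaves $\partial_k F_2$ unchanged, and then check directly that $B_kF_2=0$ so the recursion terminates at $c^2$. The paper's proof is even terser, likewise deferring the $B_kF_2=0$ verification to a direct computation, so your write-up is a correct and slightly more explicit version of the same argument.
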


\begin{proof}
The values of $F_{0},F_{1}$ agree with what has already been checked. However, in $F_{2}$, note that we add a sum of $p$-th powers, namely, $-\displaystyle\sum_{i}a_{i}x_{i}^{3}$: it can be checked directly here that $B_{k}F_{2}=0$ for all $k$.
\end{proof}

\begin{rmk}
While we conjecture that we could have taken $F_{2}$ to contain no $p$-th powers and continued the recursive process to generate $F_{3},F_{4},\ldots$ infinitely, note that adding the $p$-th powers, in this case, terminated the process.
\end{rmk}

\section{Acknowledgments}
\indent\indent This research was conducted with the support of the Program in Research in Mathematics, Engineering, and Science (PRIMES), the MIT Undergraduate Research Opportunities Program (UROP), and the John Reed fund. The author thanks Steven Sam and Pavel Etingof for supervising the project, as well as Sheela Devadas for helpful discussions. In addition, the author thanks the anonymous referees for their comments.

Computational data for this project were gathered in SAGE (\cite{sage}).

\end{document}